\documentclass[a4paper,reqno]{amsart}
\usepackage{t1enc}
\usepackage[margin=1in]{geometry}
\usepackage{ifthen}
\usepackage{xcolor}
\usepackage{graphicx}
\renewcommand{\(}{\left(}
\renewcommand{\)}{\right)}
\newcommand{\pref}[1]{(\ref{#1})}

\newcommand{\R}{\mathbf{R}}
\newcommand{\Z}{\mathbf{Z}}
\newcommand{\pr}{\mathbf{P}}
\newcommand{\E}{\mathbf{E}}

\renewcommand{\H}{\mathbb{H}}

\theoremstyle{plain}
\newtheorem{theorem}{Theorem}

\newtheorem{corollary}{Corollary}
\newtheorem{proposition}{Proposition}

\theoremstyle{definition}

\newtheorem{conjecture}{Conjecture}

\theoremstyle{remark}

\newcommand{\formula}[2][nolabel]
{\ifthenelse{\equal{#1}{nolabel}}
 {\begin{align*} #2 \end{align*}}
 {\ifthenelse{\equal{#1}{}}
  {\begin{align} #2 \end{align}}
  {\begin{align} \label{#1} #2 \end{align}}
 }
}

%\title{Green function of hyperbolic Brownian motion}
%%\footnotetext{2000 MS Classification:
%%    Primary 60J65; Secondary 60J60.
%%   {\it Key words and phrases}: Green functions, potential kernels, hitting times,
%%    Brownian motion, hiperbolic spaces.
%%    Research  supported by Wroc\l{}aw University of Technology Contract
%%   No. S 20010/I-18}
%\author{T. Byczkowski, J. Ma\l{}ecki}
    
\begin{document}
\title[Sharp estimates of Green function of hyperbolic Brownian Motion]{Sharp estimates of Green function   of hyperbolic Brownian motion}
\author{Kamil Bogus, Tomasz Byczkowski, Jacek Ma\l{}ecki}

\address{Tomasz Byczkowski\\ Institute of Mathematics of Polish Academy of Sciences \\ ul. {\'S}niadeckich 8\\ 00-956 Warsaw, Poland}
\email{tomasz.byczkowski@gmail.com}

\address{Kamil Bogus, Jacek Ma\l{}ecki\\ Faculty of Fundamental Problems of Technology \\ Department of Mathematics\\ Wroc{\l}aw University of Technology \\ ul.
Wybrze{\.z}e Wyspia{\'n}\-skiego 27 \\ 50-370 Wroc{\l}aw,
Poland}
\email{kamil.bogus@pwr.edu.pl, jacek.malecki@pwr.edu.pl}

\keywords{hyperbolic space, Green function, sharp estimates, half-space, geometric Brownian motion, Bessel process}
\subjclass[2010]{60J60}

\thanks{Tomasz Byczkowski was supported by the National Science Centre grant no. 2011/03/B/ST1/00423. Kamil Bogus and Jacek Ma\l{}ecki were supported by the National Science Centre grant no. 2013/11/D/ST1/02622.}

\begin{abstract} The main objective of the work is to provide sharp two-sided estimates of $\lambda$-Green function of hyperbolic Brownian motion of a half-space. We strongly rely on recent results obtained by K. Bogus and J. Malecki \cite{BogusMalecki:2014}, regarding precise estimates of the Bessel heat kernel of half-lines. 
%As an application we derive sharp two-sided estimates of Green function of half-spaces of hyperbolic Brownian motion. 
\end{abstract}

\maketitle

%\pagebreak

\section{Introduction}
The hyperbolic Brownian motion is the canonical diffusion on the hyperbolic space,  with the half of the Laplace-Beltrami operator as its generator. It is related to many important objects, mainly to the Bessel processes and also to some functionals important in mathematical finance \cite{DonatiMartinYor:1997}, \cite{JakubowskiWiesniewolski:2013b}. 
Investigations of hyperbolic Brownian motion have long traditions; for a comprehensive survey, see e.g.
\cite{MatsumotoYor:2005a} or \cite{BGS:2007}, \cite{BR:2006}, \cite{BMZ:2010}, \cite{JakubowskiWiesniewolski:2013a}, \cite{MaleckiSerafin:2012}, \cite{Serafin:2014}, \cite{Zak:2007}. Last years there was great interest in the process exiting a given domain.
In the  papers \cite{BGS:2007}, \cite{BR:2006} one can find a very useful representation of the Poisson kernel of a half-space, which led to provide its precise asymptotics. It turns out, however,  that the most fundamental object of the potential theory is the $\lambda$-Green function of the domain, $\lambda\geq 0$. Unfortunately,  its properties were not investigated until now in this context.  The main objective of this work is to construct an adequate representation of the 
$\lambda$-Green function $G^{\lambda}$ of hyperbolic Brownian motion of the half-space $\{x=(x_1,\ldots,x_n):\,\, x_n>a\}$, $a>0$, and to provide its precise estimates. 
To achieve this, we employ Bessel processes $BES^{(\nu)}$, $\nu>0$, of negative indices, killed at first hitting time of the point $a$. Recent paper \cite{BogusMalecki:2014} (see also \cite{BogusMalecki:2015}) contains  very precise estimates for the transition density function $p_a^{(-\nu)}(u,x,y)$ of the Bessel process $BES^{(\nu)}$, starting at the point $x>a>0$ and  killed at the first hitting time of a point $a$ by the process. We briefly call the function $p_a^{(-\nu)}(u,x,y)$ {\it{the Bessel heat kernel}}. 

\medskip

Results of the paper are based on two important points:
\begin{itemize}
\item{the representation of $G^{\lambda}$ in terms of the appropriate Bessel heat kernel;}
\item{sharp estimates of the above-mentioned Bessel heat kernel \cite{BogusMalecki:2014}.}
\end{itemize}

\medskip 

We first  introduce some notation. If  $n$ denotes the dimension of the hyperbolic space, $n>2$ and
 $\lambda \geq 0$  we denote $\nu = \sqrt{2\lambda + (\frac{n-1}{2})^2}$. Furthermore, 
$x=(\tilde{x},x_n), \ y=(\tilde{y},y_n)$, $x_n,y_n>a$, where $\tilde{x}=(x_1,\ldots x_{n-1}) \in\R^{n-1}$.
The main results of the paper are the following:
\medskip
\begin{center}
{\bf{Representation of $G^{\lambda}$}}
\end{center}
%\formula[eq:GreenBessel]{
\begin{equation*} 
 G^{\lambda}(x,y) = \(x_n y_n\)^{\mu-\nu}\, \int_0^\infty \frac{1}{(2\pi u)^{\frac{n-1}{2}}} 
	\exp\left({-\frac{|\tilde{x}-\tilde{y}|^2}{2u}}\right)p_a^{(-\nu)}(u,x_n,y_n)\,du \/,
\end{equation*}
%}
where $p_a^{(-\nu)}(u,x_n,y_n)$ is the appropriate Bessel heat kernel.

\medskip

\begin{center}
{\bf{Estimates of $G^{\lambda}$}}
\end{center}
For every $\mu=\frac{n-1}{2}, \ n>2$, $\lambda\geq 0$ and $a\geq 0$ we have
%\formula{
\begin{equation*}
G^{\lambda}(x,y)\stackrel{\lambda,n}{\approx} \left(\frac{2x_ny_n}{|x-y|^{2}}\right)^{\mu-1/2}\left(1\wedge\frac{2(x_n-a)(y_n-a)}{|x-y|^2}\right)\left(1\wedge\frac{2x_ny_n}{|x-y|^2}\right)^{\nu-1/2}\/,
\end{equation*}
whenever $x_n,y_n>a$. Here $\stackrel{\lambda,n}{\approx}$ means that the ratio of the functions on the left and right-hand side is bounded from below and above by positive constants depending only on $\lambda$ and $n$.
\medskip 

 Throughout the paper we  rely on the representation 
of the joint density of an integral functional $A_t^{(-\nu)} = \int_0^t \exp 2(B_s^{(-\nu)})\,ds$ and the geometric Brownian motion with drift
$\exp(B_t^{(-\nu)})= \exp(B_t - \nu\, t)$, given in \cite{MatsumotoYor:2005a}.  An important point in our construction consists of application of the  Lamperti Representation stating that the last coordinate of the hyperbolic Brownian, that is, the geometric Brownian motion 
$\exp(B_t^{(-\nu)})$,  can be identified with the Bessel process $BES^{(-\nu)}$  with the time changed 
according to the functional $A_t^{(-\nu)}$.

\section{Preliminaries}
In this section we collect some preliminary material. For more information on the modified Bessel functions we refer the Reader to \cite{AbramowitzStegun:1972} and \cite{Erdelyi:1954}. For questions regarding Bessel processes, stochastic differential equations and one-dimensional diffusions we refer to \cite{RevuzYor:2005} and to \cite{IW}.  
\subsection{Modified Bessel Functions} 
Various potential-theoretic objects appearing in the theory of Bessel processes, geometric Brownian motion or hyperbolic potential theory are expressed in terms of modified Bessel functions  $I_{\vartheta}$ and $K_{\vartheta}$. For convenience of the Reader we collect here basic information and properties of these functions used in the sequel.

The \textit{modified Bessel function $I_{\vartheta}$ of the first kind} is defined by (see, e.g. \cite{Erdelyi:1954}, 7.2.2 (12)):
  \begin{equation}
    \label{I_definition}
     I_{\vartheta}(z) = \(\frac{z}{2}\)^\vartheta \sum_{k=0}^{\infty} \left(\frac{z}{2}\right)^{2k}\frac{1}{k!\Gamma(k+\vartheta+1)} \/, \quad    z >0 \/,
  \end{equation}
  where $\vartheta\in \R$.
  The \textit{modified Bessel function of the third kind} is defined by (see \cite{Erdelyi:1954}, 7.2.2 (13) and (36)):
  \begin{eqnarray}
    \label{K_definition1}
    K_\vartheta (z) &=& \frac{\pi}{2\sin(\vartheta\pi)}\left[I_{-\vartheta}(z)-I_\vartheta(z)\right]\/,\quad
    \vartheta \notin \Z \/,\\
    \label{K_definition2}
    K_n (z) &=& \lim_{\vartheta\to n}K_\vartheta(z) = (-1)^n
    \frac{1}{2}\left[\frac{\partial I_{-\vartheta}}{\partial \vartheta} -
    \frac{\partial I_{\vartheta}}{\partial \vartheta}\right]_{\vartheta=n}\/,\quad n\in \Z\/.
  \end{eqnarray}

We recall the asymptotic behavior of $I_\vartheta(z)$ at zero
 \begin{eqnarray}
\label{asympt_I_zero}
  I_\vartheta(z)&\sim& \frac{1}{\Gamma(\vartheta+1)}
  \left(\frac{z}{2}\right)^{\vartheta}\,,
   \quad z\to 0^+, \label{asympt_I_0}
 \end{eqnarray}
 where  $g(r) \sim f(r) $ means that the ratio of $g$ and $f$ tends to $1$, and at infinity
\begin{equation} \label{asympt_I_infty}
I_\vartheta(z)= (2\pi z)^{1/2}\,e^z [1+ O(z^{-1})]\/,\quad z\to \infty\/. 
\end{equation}

\subsection{Bessel processes}
The basic material  concerning  Bessel processes is taken from
\cite{RevuzYor:2005}, Ch. XI. We begin with a definition of \textit{squared Bessel process} $BESQ^{\delta}(x)$ started at $x\geq 0$. It is defined as the unique strong solution of the equation 
\begin{equation}
\label{defBess}
dZ(t) = 2\,\sqrt{|Z(t)|}d\beta(t) + \delta\,dt\,, \quad Z(0)=x\,,
\end{equation}
where $\beta(t)$ denotes a one-dimensional Brownian motion. Here $\delta\in \R$ is called {\it{the dimension}}
 of $BESQ^{\delta}$. 
For $\delta\geq 0$ the process $Z(t)$ is non-negative and the square root in the equation \pref{defBess} can be omitted. 
It is known that for $0<\delta<2$ the point $0$ is {\it{reflecting}}; for $\delta=0$ it is {\it{absorbing}}. When  $\delta < 0$ the situation gets more trickier:
the process $Z(t)$, when starting from $x>0$ attains the point $0$ in finite time and, after that, becomes negative (and behaves like
the process $(-Z(t))$ with positive dimension $(-\delta)$ (see  \cite{GoingYor:2003}). In this paper, however, we impose the killing
condition at the point $0$ and again call the resulting process {\it{Bessel process}} (of negative dimension). Thus, in our setting, the process $Z(t)$ is always non-negative so we are able to take the square root.

The square root of $BESQ^{\delta}(x^2)$, $x\geq 0$, is called the {\it Bessel process} of dimension $\delta$ started at $x$ and is denoted by ($BES^{\delta}(x)$). We introduce also the {\it index} $\mu=(\delta/2)-1$ of the corresponding process, and write $BES^{(\mu)}$ (instead o {$BES^{\delta}$}) if we want to use $\mu$ (instead of $\delta$). 

For $\mu\geq 0$ the probability density function of the $BES^{(\mu)}(x)$ semigroup is of the form
\begin{equation}
\label{BES}
p^{(\mu)}(t,x,y) = \frac{y}{t}
\(\frac{y}{x}\)^{\mu}
  e^{-(x^2+y^2)/2t}I_{\mu} 
     \(xy/{t}\)
\quad {\textrm{for}} \quad x>0
\end{equation}
and
\begin{equation}
\label{BESzero}
p^{(\mu)}(t,0,y) = 2 (2t)^{-\mu-1}t^{-(\mu+1)}\Gamma(\mu+1)^{-1}\,y^{2\mu+1}
  e^{-y^2/2t} \,.
\end{equation}
Note that the above-given formula describes the density considered with respect to Lebesgue measure. However, sometimes the symmetric version of the density can be more convenient. Then we have to deal with the reference measure $m^{(\mu)}(dy) = m^{(\mu)}(y)dy$, where $m^{(\mu)}(y)=y^{2\mu+1}$. We will switch to the symmetric version in Section \ref{sec:GreenFunction}.

 We denote by $\textbf{P}_x^{(\mu)}$ and $\E_x^{(\mu)}$ the probability law and the corresponding expected value of a Bessel process $BES^{(\mu)}(x)$ on the canonical paths space with the starting point $x\geq 0$. Let $\mathcal{F}_t=\sigma\{R_s,s\leq t\}$ be the filtration of the coordinate process $R_t$. Let us denote the first hitting time of the level $a\geq 0$ by
\begin{eqnarray*}
   T_a = \inf\{t>0; R_t=a\}\/.
\end{eqnarray*}
We have the absolute continuity property for the laws of the Bessel processes with different indices
\begin{eqnarray}
  \label{Bessel:AC}
  \left. \frac{d\textbf{P}^{(\mu)}_x}{d\textbf{P}^{(\nu)}_x}\right|_{\mathcal{F}_t} = \left(\frac{R_t}{x}\right)^{\mu-\nu}\exp\left(-\frac{\mu^2-\nu^2}{2}\int_0^t\frac{ds}{{R_s^2}}\right)\/,\quad {\textbf{P}^{(\nu)}_x}\textrm{ - a.s. on }\{T_0>t\}\/.
\end{eqnarray}
If $\nu\geq 0$ then the condition $\{T_0>t\}$ can be omitted. 
In this work we are interested primarily in the case of negative drift, which we denote by writing the index as $(-\mu)$, thus assuming in the sequel that
$\mu\geq 0$. Observe that the formula \pref{Bessel:AC} implies that in the case of $(-\mu)$  the density function of $BES^{(-\mu)}(x)$, $x>0$ is still of the form \pref{BES} with $\nu=-\mu$. 

\subsection{Exponential functionals}

We now begin with a brief description of Matsumoto-Yor approach \cite{MatsumotoYor:2005a}, which is a starting point of our construction. The important point consists of the computation of the functional
\begin{equation*}
\phi(y)=\E_x^{(0)}\left[\exp\left(-\frac{\mu^2}{2}\int_0^t\frac{ds}{R_s^2}\right)|R_t=y\right]\,.
\end{equation*}
To do this we apply the formula \pref{Bessel:AC} to obtain
\begin{eqnarray*}
&{}& \E_x^{(0)}\left[e^{-\alpha R_t}\,\left(\frac{R_t}{x}\right)^{-\mu}\,
\E_x^{(0)}\left[\exp \left(-\frac{\mu^2}{2}\int_0^t\frac{ds}{R_s^2}\right)|R_t\right]\right] \\
&=& \int_0^{\infty} e^{-\alpha\,y}\,\frac{y}{t}\,\(\frac{y}{x}\)^{-\mu}\,\exp\(-\frac{x^2+y^2}{2t}\)\,I_0 \(\frac{xy}{t}\)\,\phi(y)\,dy\\
&=& \E_x^{(-\mu)}[e^{-\alpha R_t^{(-\mu)}}] =
\int_0^{\infty} e^{-\alpha\,y}\,\frac{y}{t}\,\(\frac{y}{x}\)^{-\mu}\,\exp\(-\frac{x^2+y^2}{2t}\)\,I_{|\mu|} \(\frac{xy}{t}\)\,dy\/.
\end{eqnarray*}
The above equality provides the desired formula
\begin{equation*}
\phi(y)=\E_x^{(0)}\left[\exp\left(-\frac{\mu^2}{2}\int_0^t\frac{ds}{(R_s)^2}\right)|R_t=y\right]
= \frac{I_{|\mu|} \left(\frac{xy}{t}\right)}{I_0 \left(\frac{xy}{t}\right)}\/.
\end{equation*}
Observe that the above formula defines the so-called Hartman-Watson distribution $\eta_r$ with the parameter $r=xy/t$
described in the terms of Laplace transform:
\begin{equation*}
\int_0^{\infty} e^{-\mu^2\,t/2}\,\eta_r(dt) = \frac{I_{|\mu|}(r)}{I_{0}(r)}\/.
\end{equation*}
Below, we need a related density function, denoted by $\theta(r,t)$, defined as
\begin{equation} \label{HW}
\int_0^{\infty} e^{-\mu^2\,t/2}\,\theta(r,t)\,dt = I_{|\mu|}(r) \quad r>0\/.
\end{equation}

\section{Joint distribution of $(A_t^{(-\mu)},B_t^{(-\mu)})$}
Let $(B_t)_{t\geq 0}$ be the Brownian motion in $\mathbb{R}$ starting from $x$. We denote by $\pr^{x}$ and $\E^x$ the corresponding probability law and expected value. Note that the starting point is now indicated in the superscript to distinguish it from the probability law and the expected value for the Bessel process with index $\mu\in\R$ introduced previously. For $\mu\geq 0$ we denote by $B^{(-\mu)}_t=B_t-\mu t$ the Brownian motion with negative drift. We define
\begin{equation*}
  A^{(-\mu)}_t = \int_0^t \exp(2B_s^{(-\mu)})ds\/, \quad t\geq 0\/.
\end{equation*}
For $\mu=0$ we will use the shortened notation $A_t:=A^{(0)}_t$. 

To find the joint distribution of $(A_t^{(-\mu)},B_t^{(-\mu)})$ we proceed as follows.
The crucial point is to determine the formula for the Green function $G(x,y,\alpha^2/2)=(H_{\lambda}+\alpha^2/2)^{-1}$, which is defined as a formal inverse, with $H_{\lambda}$ being the 
following Schrodinger operator
\begin{equation} \label{Schr}
H_{\lambda} = -\frac{1}{2} \frac{d^2}{dx^2} + \frac{1}{2} \lambda^2 e^{2x}\,.
\end{equation}
We first assume that $\mu=0$. For $x\leq y$ we obtain 
\begin{equation} \label{GreenSchr}
G(x,y,\alpha^2/2)=2\,I_{\alpha}(\lambda e^x)\,K_{\alpha}(\lambda e^y)\/.
\end{equation}
It is easily seen that the Feynmann-Kac semigroup generated by the operator \pref{Schr} is of the form
\begin{eqnarray*} 
e^{-H_{\lambda} t}\,f(x) &=& \E^x[e^{-\lambda^2/2\,\int_0^t \exp(2B_s)\,ds}\,f(B_t)] \\
&=& \E^x[\E^x[e^{-\lambda^2/2\,\int_0^t \exp(2B_s)\,ds}|f(B_t)]]\\
&=&\int_0^{\infty}\E^0[e^{-\lambda^2/2\,\,e^{2x}\,A_t}|B_t=y-x]\,f(y)\,\frac{e^{-(y-x)^2/2t}}{\sqrt{2\pi\,t}}\,dy\/.
\end{eqnarray*}
This provides the following formula for the transition density of the above semigroup
\begin{eqnarray*} 
 g_{\lambda}(t;x,y)&=& \E^0[e^{-\lambda^2/2\,\,e^{2x}\,A_t}|B_t=y-x]\,\frac{e^{-(y-x)^2/2t}}{\sqrt{2\pi\,t}}\\
 &=& \int_0^{\infty} e^{-\lambda^2/2\,\,e^{2x} u} \pr^0(A_t\in du|B_t=y-x)\,\frac{e^{-(y-x)^2/2t}}{\sqrt{2\pi\,t}}\/.
\end{eqnarray*}
Integrating with respect to $t>0$ we find that
\begin{equation*} 
 \int_0^{\infty} e^{-\alpha^2\,t/2}\,g_{\lambda}(t;x,y)\,dt = G(x,y,\alpha^2/2) = 2\,I_{\alpha}(\lambda e^x)\,K_{\alpha}(\lambda e^y)\/.
\end{equation*}
Taking into account the product formula for Bessel functions and the definition of the function $\theta(r,t)$ we obtain
for $x=0$
\begin{eqnarray*} 
G(0,y,\alpha^2/2)  &=& \int_0^{\infty} e^{-u/2}\,e^{-\lambda^2/2u}\,e^{-\lambda^2\,e^{2y}/2u}
\int_0^{\infty} e^{-\alpha^2 t/2}\,\theta\(\frac{\lambda^2 e^y}{u},t\)\,dt\,\frac{du}{u}\\
&=& \int_0^{\infty} e^{-\lambda^2/2\xi}\,e^{-1/2\xi}\,e^{-e^{2y}/2\xi}
\int_0^{\infty} e^{-\alpha^2 t/2}\,\theta\(\frac{ e^y}{\xi},t\)\,dt\,\frac{d\xi}{\xi}\\
&=& \int_0^{\infty} e^{-\alpha^2 t/2}\,\int_0^{\infty} e^{-\lambda^2/2u}\,e^{-(1+e^{2y})/2u}\,
 \theta\(\frac{ e^y}{u},t\)\,dt\frac{du}{u}\\
&=& \int_0^{\infty} e^{-\alpha^2 t/2} \int_0^{\infty}e^{-\lambda^2/2u}\,
 \pr^0(A_t\in du|B_t=y)\,\frac{e^{-y^2/2t}}{\sqrt{2\pi\,t}}dt\/.
\end{eqnarray*}
By comparing, we obtain for $x=0$
\begin{equation}
\label{JointDensity0}
\pr^0(A_t\in du|B_t=y)\,\frac{e^{-y^2/2t}}{\sqrt{2\pi\,t}} =  \exp\(-\frac{1+e^{2y}}{2u}\)\theta\(\frac{e^{y}}{u},t\)\,\frac{du}{u}\/,
\end{equation}
and for arbitrary $x$
\begin{equation}
\label{JointDensityx}
\pr^x(A_t\in du,B_t=y)\,\frac{e^{-(y-x)^2/2t}}{\sqrt{2\pi\,t}} =  \exp\(-\frac{e^{2x}+e^{2y}}{2u}\)\theta\(\frac{e^{x+y}}{u},t\)\,\frac{du}{u}\/;
\end{equation}
which, after taking into account the drift formula (Girsanov) gives the following
 formula for the joint density of the variables $(A_t^{(-\mu)},B_t^{(-\mu)})$
\begin{equation}
\label{JointDensity}
\pr^x(A_t^{(-\mu)}\in du,B_t^{(-\mu)}\in dy) = e^{-\mu^2 t/2}e^{-\mu(y-x)} \exp\(-\frac{e^{2x}+e^{2y}}{2u}\)\theta\(\frac{e^{x+y}}{u},t\)\,\frac{1}{u}\/.
\end{equation}

%%%%%%%%%%%%%%%%%%%%%%%%%%%%%%%%%%%%%%%%%%%%%%%%%%%%%%%%%%%%%%%%%%%%%%%%%%%%%%%%%%%%%%%%%%%%%%%%%%%%%%%%%%%%%%%%%%%%%%%%%%%%%%%%%%%%%%%%%%%%%%%%%%%%%%%%%%%%%%%%%%%%%%%%%%%%%%%%%%%%%%%%%%%%%%%%%%%%%%%%%%%%%%%%%%%%%%%%%%%%%%%%%%%%%%%%%%%%%%%%%%%%%%%%%%%%%%%%%%%%%%%%%%%%%%%%%%%%%%%%%%%%%%%%%%%%%%%%%%%%%%%%%%%%%%%%%%%%%%%%%%%%%%%%%%%%%%%%%%%%%%%%%%%%%%%%%%%%%%%%%%%%%%%%%%%%%%%%%%%%%%%%%%%%%%%%%%%%%%%%%%%%%%%%%%%%%%%%%%%%%%%%%%%%%%%

\section{Green function of $(A^{(-\mu)}, \exp(B^{(-\mu)}))$}
\label{sec:GreenFunctionAB}
We begin this section with providing the formula for the $\lambda$-potential of the two-dimensional process $(A^{(-\mu)}, \exp(B^{(-\mu)}))$ together with its Laplace transform. The result can be found in the literature, but for completeness of the exposure and for convenience of the Reader we included it together with the proof in the following proposition. 
\begin{proposition}
$\lambda$-potential of the process $(A_t^{(-\mu)}, \exp(B_t^{(-\mu)})$ is of the form
\begin{equation}
\label{Qpotential}
 {\mathcal{Q}}_\mu^\lambda(x,y;u) 
 = \frac{1}{y}\(\frac{x}{y}\)^{\mu} \exp\(-\frac{x^2+y^2}{2u}\) I_\nu\(\frac{xy}{u}\)\frac{1}{u}\/=
 \(\frac{x}{y}\)^{\mu-\nu}\,{\mathcal{Q}}_\nu^0(x,y;u) \/,
\end{equation}
where $\nu=\sqrt{2\/\lambda + \mu^2}$. The Laplace transform of ${\mathcal{Q}}_\mu^\lambda$ is of the form
\begin{equation}
 \mathcal{L}{\mathcal{Q}}_\mu^\lambda(x,y;\cdot)(r^2/2) = \frac{2}{y}\(\frac{x}{y}\)^{\mu} I_\nu(r x)K_\nu(r y) =
 2\(xy\)^{\mu} I_\nu(r x)K_\nu(r y)\, m^{(-\mu)}(y)\/.
\end{equation}
$m^{(-\mu)}(y)=y^{-2\mu-1}$ denotes here the density of the reference measure $dm^{(-\mu)}$.
\end{proposition}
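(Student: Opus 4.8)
The plan is to unravel the definition of the $\lambda$-potential and reduce everything to the joint density \pref{JointDensity} together with the defining relation \pref{HW} of $\theta$. In the parametrization used here $\mathcal{Q}_\mu^\lambda(x,y;u)\,dy\,du$ is the $\lambda$-potential density of $(A^{(-\mu)},\exp(B^{(-\mu)}))$: if the geometric Brownian motion starts at $x$ and the variable $u$ records the value of the clock $A_t^{(-\mu)}$, then
\[
\mathcal{Q}_\mu^\lambda(x,y;u)\,dy\,du=\int_0^\infty e^{-\lambda t}\,\pr^{\log x}\!\left(\exp(B_t^{(-\mu)})\in dy,\ A_t^{(-\mu)}\in du\right)dt .
\]
First I would pass from the Brownian coordinates of \pref{JointDensity} to the geometric ones by writing $e^{B_0^{(-\mu)}}=x$, $e^{B_t^{(-\mu)}}=y$; this turns $e^{2x},e^{2y},e^{x+y}$ into $x^2,y^2,xy$, converts $e^{-\mu(y-x)}$ into $(x/y)^\mu$, and — crucially — produces a Jacobian factor $1/y$ coming from $B_t^{(-\mu)}\in d(\log y)$. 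The outcome is
\[
\pr^{\log x}\!\left(\exp(B_t^{(-\mu)})\in dy,\ A_t^{(-\mu)}\in du\right)=e^{-\mu^2 t/2}\,\frac1y\Bigl(\frac xy\Bigr)^{\!\mu}\exp\!\Bigl(-\frac{x^2+y^2}{2u}\Bigr)\theta\!\Bigl(\frac{xy}{u},t\Bigr)\frac{du\,dy}{u}.
\]

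Next I would integrate $e^{-\lambda t}$ over $t>0$. Only the factor $e^{-\mu^2 t/2}\theta(xy/u,t)$ depends on $t$, and since $e^{-\lambda t}e^{-\mu^2 t/2}=e^{-(2\lambda+\mu^2)t/2}=e^{-\nu^2 t/2}$, relation \pref{HW} (with the free parameter there taken to be $\nu\ge 0$) gives $\int_0^\infty e^{-\nu^2 t/2}\theta(xy/u,t)\,dt=I_\nu(xy/u)$. Pulling the remaining $u$- and $y$-dependent factors out of the $t$-integral yields exactly the first displayed expression for $\mathcal{Q}_\mu^\lambda(x,y;u)$. The identity $\mathcal{Q}_\mu^\lambda=(x/y)^{\mu-\nu}\mathcal{Q}_\nu^0$ then follows by inspection: taking $\lambda=0$ and index $\nu$ in the formula just obtained (note $\sqrt{2\cdot 0+\nu^2}=\nu$) gives $\mathcal{Q}_\nu^0(x,y;u)=\frac1y(x/y)^\nu\exp\bigl(-(x^2+y^2)/2u\bigr)\,I_\nu(xy/u)\,\frac1u$, and the quotient of the two is $(x/y)^{\mu-\nu}$.

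For the Laplace transform I would compute $\int_0^\infty e^{-r^2u/2}\mathcal{Q}_\mu^\lambda(x,y;u)\,du$; after factoring out the $u$-free prefactor $\frac1y(x/y)^\mu$, the remaining integral is $\int_0^\infty e^{-r^2u/2}\exp\bigl(-(x^2+y^2)/2u\bigr)\,I_\nu(xy/u)\,\frac{du}{u}$, which under the substitution $u=t/r^2$ becomes the classical Bessel product integral $\int_0^\infty e^{-t/2-(a^2+b^2)/2t}I_\nu(ab/t)\,\frac{dt}{t}=2I_\nu(a)K_\nu(b)$ with $a=rx<b=ry$. This gives $\frac2y(x/y)^\mu I_\nu(rx)K_\nu(ry)$, and rewriting $\frac2y(x/y)^\mu=2(xy)^\mu y^{-2\mu-1}=2(xy)^\mu m^{(-\mu)}(y)$ produces the second displayed form. (Equivalently, since $\mathcal{Q}_\nu^0(x,y;u)=y^{-2}p^{(-\nu)}(u,x,y)$ with $p^{(-\nu)}$ the Bessel heat kernel arising from \pref{BES}, this step is just the standard evaluation of the $r^2/2$-resolvent of $BES^{(-\nu)}$ in terms of $I_\nu$ and $K_\nu$.)

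The computation is essentially bookkeeping; the only points demanding care are the Jacobian $1/y$ in the change of coordinates, the cancellation that collapses $e^{-\lambda t}e^{-\mu^2 t/2}$ into $e^{-\nu^2 t/2}$ so that \pref{HW} applies with index $\nu$, and the hypothesis $x<y$ needed for the Bessel product integral — for $x\ge y$ one symmetrizes, replacing the arguments of $I_\nu$ and $K_\nu$ by $r(x\wedge y)$ and $r(x\vee y)$. I do not expect any genuine obstacle here.
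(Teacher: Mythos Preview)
Your proposal is correct and follows essentially the same approach as the paper: both use the joint density \pref{JointDensity} together with the Jacobian $1/y$ from $B_t^{(-\mu)}\to\exp(B_t^{(-\mu)})$, collapse $e^{-\lambda t}e^{-\mu^2 t/2}$ to $e^{-\nu^2 t/2}$ and apply \pref{HW}, and then evaluate the Laplace transform in $u$ via the Bessel product integral. The only cosmetic difference is that the paper first computes the $\lambda$-potential of $(A_t^{(-\mu)},B_t^{(-\mu)})$ and then changes to geometric coordinates, whereas you change coordinates first; the two orderings are equivalent.
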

\begin{proof}
Taking into account the formula \pref{JointDensity}  and \pref{HW}, the $\lambda$--potential $\tilde{{\mathcal{Q}}}_\mu^\lambda(x,y;u)$ of 
$(A_t^{(-\mu)},B_t^{(-\mu)})$ is given by
\begin{equation}
  \int_0^\infty e^{-\lambda t} \pr^x(A_t^{(-\mu)}\in du,B_t^{(-\mu)}\in dy)dt = e^{-\mu(y-x)}\exp\(-\frac{e^{2x}+e^{2y}}{2u}\)I_\nu\(\frac{e^xe^y}{u}\)\frac{1}{u}\/,
\end{equation}
where $\nu=\sqrt{2\lambda+\mu^2}$.

Using the formula for the product of modified Bessel function [see \cite{Erdelyi:1953:volII}, p. 64 (37)] we can compute the Laplace transform of
$\tilde{{\mathcal{Q}}}_\mu^\lambda(x,y;u)$ as a function of $u$
\begin{eqnarray*}
  \mathcal{L}\tilde{{\mathcal{Q}}}_\mu^\lambda(x,y;\cdot)(r^2/2) &=& \int_0^\infty e^{-r^2 u /2} \int_0^\infty e^{-\lambda t} P^x(A_t^{(-\mu)}\in du,B_t^{(-\mu)}\in dy)dt\,du \\
  &=& \(\frac{e^x}{e^y}\)^{\mu} \int_0^\infty e^{-r^2 u /2}\exp\(-\frac{e^{2x}+e^{2y}}{2u}\)I_\nu\(\frac{e^xe^y}{u}\)\frac{1}{u}\,du\\
  &=& \(\frac{e^x}{e^y}\)^{\mu} \int_0^\infty e^{-\xi /2} \exp\(-\frac{(r e^{x})^2+(r e^{y})^2}{2\xi}\)I_\nu\(\frac{r e^x\,r e^y}{\xi}\)\frac{d\xi}{\xi}\\
&=& 2 \(\frac{e^x}{e^y}\)^{\mu} I_\nu(r e^x)K_\nu(r e^y)\/. 
\end{eqnarray*}
Changing variables, we  compute the $\lambda$-potential ${\mathcal{Q}}_\mu^\lambda(x,y;u)$ of the process $(A_t^{(-\mu)},\exp(B_t^{(-\mu)}))$. 
We have for $\exp(B_t^{(-\mu)})$ starting from $x$:
\begin{eqnarray*}
   \pr^x(A_t^{(-\mu)}\in du,\exp(B_t^{(-\mu)})\in dy) &=& \frac{1}{y} \pr^{\log x}(A_t^{(-\mu)}\in du,B_t^{(-\mu)}\in d\log y)\\
   &=& e^{-\mu^2 t/2} \frac{1}{y}\(\frac{x}{y}\)^{\mu} \exp\(-\frac{x^2+y^2}{2u}\) \theta\(\frac{xy}{u},t\)\frac{1}{u}
\end{eqnarray*}
and
\begin{eqnarray*}
  {\mathcal{Q}}_\mu^\lambda(x,y;u) &=& \int_0^\infty e^{-\lambda t}pr^x(A_t^{(-\mu)}\in du,\exp(B_t^{(-\mu)})\in dy)dt\\
  &=& \frac{1}{y}\(\frac{x}{y}\)^{\mu} \exp\(-\frac{x^2+y^2}{2u}\) I_\nu\(\frac{xy}{u}\)\frac{1}{u}\/.
\end{eqnarray*}
The function ${\mathcal{Q}}_\nu^\lambda(\cdot,\cdot;u)$ is symmetric with respect to the reference measure $dm^{(-\mu)}$ with the density
\begin{equation*}
   m^{(-\mu)}(y) = \frac{1}{y^{2\mu+1}}\/.
\end{equation*}
When $\lambda=0$, we write ${\mathcal{Q}}_\mu(x,y;u)$ instead of ${\mathcal{Q}}_\mu^\lambda(x,y;u)$. Thus, taking into account the previous
formula we have
\begin{equation*}
  {\mathcal{Q}}_\mu^\lambda(x,y;u) =
  \(\frac{x}{y}\)^{\mu-\nu}\,{\mathcal{Q}}_\nu(x,y;u)  \/.
\end{equation*}
The Laplace transform of $u\longrightarrow {\mathcal{Q}}_\mu^\lambda(x,y;u)$ is given by
\begin{eqnarray*}
  \mathcal{L}{\mathcal{Q}}_\mu^\lambda(x,y;\cdot)(r^2/2) &=& \int_0^\infty e^{-r^2 u /2} {\mathcal{Q}}_\mu^\lambda(x,y;u)du\\
  &=& \int_0^\infty e^{-r^2 u /2} \frac{1}{y}\(\frac{x}{y}\)^{\mu} \exp\(-\frac{x^2+y^2}{2u}\) I_\nu\(\frac{xy}{u}\)\frac{du}{u}\\
  &=& \frac{2}{y}\(\frac{x}{y}\)^{\mu} I_\nu(r x)K_\nu(r y)\\
  &=& 2\(xy\)^{\mu} I_\nu(r x)K_\nu(r y)\, m^{(-\mu)}(y)\/.
\end{eqnarray*}
\end{proof}
To obtain formulas for the Green function we have to deal with the hitting time $\tau_a$. It is the first hitting time of a level $a$ of the process $\exp(B_t^{(-\mu)})$ starting from $x>a$. More precisely, we rather need to consider the density function $q_{\mu}^{x,a}(s)$ (where $x$ is the starting point of $\exp(B_t^{(-\mu)})$) of the following stopped integral functional
\begin{equation*}
   A^{(-\mu)}_{\tau_a} = \int_0^{\tau_a} \exp(2B_s^{(-\mu)})ds\/.
\end{equation*}
 We have
\begin{eqnarray*}
q_{\mu}^{x,a}(t) &=& \frac12\, \tilde{q}_\mu^{x,a}(t/2)\/,
\end{eqnarray*}
where $\tilde{q}_\mu^{x,a}(t)$ denotes the density investigated in the paper  \cite{BR:2006}. We have the following scaling properties of the density $q_{\mu}^{x,a}(t)$
\begin{equation*}
q_{\mu}^{x,a}(u)= \frac{1}{a^2}q_{\mu}^{x/a,1}(u/a^2)\/.
\end{equation*}
Considering an appropriate Schr\"{o}dinger equation, we can write the Laplace transform of $A^{(-\mu)}_{\tau_a}$ as follows:
\begin{equation*}
   \E^{x}[\exp(-\frac{r^2}{2}A^{(-\mu)}_{\tau_a})] = \(\frac{x}{a}\)^\mu \frac{K_\mu(rx)}{K_\mu(ra)}
	=\int_0^{\infty}\,\exp(-\frac{r^2}{2}\,s)\/ q_{\mu}^{x}(s)\/ds\/.
\end{equation*}
Moreover, for every $\lambda\geq 0$ we get in a similar way
\begin{equation}
\label{lambda_q}
   \E^{x}[\exp(-\lambda\tau_a)\exp(-\frac{r^2}{2}A^{(-\mu)}_{\tau_a})] = \(\frac{x}{a}\)^\mu \frac{K_\nu(rx)}{K_\nu(ra)}\/,
\end{equation}
where $\nu=\sqrt{2\/\lambda+\mu^2}$. Further on we consider that the point $a$ in the definition of $q_{\mu}^{x,a}(t)$ is fixed so we omit this superscript. Taking into account (\ref{lambda_q}) we obtain for any positive Borel function $f$:
\begin{equation}
\label{lambda_density}
\E^{x}[\exp(-\lambda\tau_a)\,;f(A^{(-\mu)}_{\tau_a})] = \(\frac{x}{a}\)^{\mu-\nu} 
\E^{x}[f(A^{(-\nu)}_{\tau_a})] = \(\frac{x}{a}\)^{\mu-\nu} \int_0^{\infty} f(s)\,q_{\nu}^{x}(s)\,ds\,.
\end{equation}

\begin{theorem}
The $\lambda$-Green function of the of the process $(A_t^{(-\mu)},\exp(B_t^{(-\mu)}))$ killed at the first hitting time of the level $a$ has the following 
form (for $a<x<y$)
\begin{equation*}
{\mathcal{G}}_{\mu}^{\lambda}(x,y;u) = \(\frac{x}{y}\)^{\mu-\nu} [{\mathcal{Q}}_\mu(x,y;u)-{\mathcal{Q}}_\mu\ast q_\nu^{x}(a,y;u)]
\end{equation*}
with the corresponding Laplace transform
\begin{eqnarray*}
% \label{Green_laplace_0}  
   \int_0^\infty e^{-r^2u/2} {\mathcal{G}}_{\mu}^{\lambda}(x,y;u)\,du
 &=& 2\(xy\)^{\mu} \((I_\nu(r x)\/K_\nu(r a)- K_\nu(rx)\/I_\nu(ra)\)\frac{K_\nu(ry)}{K_\nu(ra)})\,m^{(-\mu)}(y)\\
   &=& 2\(xy\)^{\mu}\frac{K_\nu(ry)}{K_\nu(ra)}\,S_{\nu}(rx,ra)\,m^{(-\mu)}(y)\/,
\end{eqnarray*}
where $S_{\nu}(\alpha,\beta)=I_\nu(\alpha)\,K_\nu(\beta)- K_\nu(\alpha)\,I_\nu(\beta)$, $0<\beta<\alpha$.
\end{theorem}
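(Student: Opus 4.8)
The plan is to derive the Green function from the $\lambda$-potential $\mathcal{Q}_\mu^\lambda$ via the standard "hitting decomposition" for a killed process, then identify the Laplace transform by direct substitution of the formulas already established. The key structural fact is that for a strong Markov process killed upon hitting $a$, the Green function equals the free potential minus a correction accounting for paths that reach $a$ before time $t$; in Laplace-transformed (in $u$) and $\lambda$-discounted form this becomes the classical formula with a Wronskian-type combination $S_\nu$.

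First I would write down the hitting-time decomposition. Starting $\exp(B^{(-\mu)})$ from $x>a$, the strong Markov property at $\tau_a$ gives, for the $\lambda$-potential kernel in the variable $u$,
\begin{equation*}
\tilde{\mathcal{G}}_\mu^\lambda(x,y;u) = \mathcal{Q}_\mu^\lambda(x,y;u) - \E^x\!\left[e^{-\lambda\tau_a};\mathcal{Q}_\mu^\lambda(a,y;u-A^{(-\mu)}_{\tau_a})\,\ind_{\{A^{(-\mu)}_{\tau_a}<u\}}\right],
\end{equation*}
where the subtracted term is a convolution in $u$ against the joint law of $(\tau_a, A^{(-\mu)}_{\tau_a})$. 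Here I use that the additive functional $A^{(-\mu)}$ restarts from its value at $\tau_a$ once the process hits $a$. Now I invoke \pref{lambda_density}: absorbing the discount $e^{-\lambda\tau_a}$ converts the index $\mu$ into $\nu=\sqrt{2\lambda+\mu^2}$ at the cost of the factor $(x/a)^{\mu-\nu}$, so the correction term becomes $(x/a)^{\mu-\nu}\,\mathcal{Q}_\mu^\lambda(a,\cdot)\ast q_\nu^x$ in the $u$-variable. Combining with the relation $\mathcal{Q}_\mu^\lambda(x,y;u)=(x/y)^{\mu-\nu}\mathcal{Q}_\nu(x,y;u)$ from the Proposition (and noting $\mathcal{Q}_\mu^\lambda(a,y;u)=(a/y)^{\mu-\nu}\mathcal{Q}_\nu(a,y;u)$, so the $a$-powers match the $(x/a)^{\mu-\nu}$ prefactor to produce the stated $(x/y)^{\mu-\nu}$), one arrives at the displayed formula $\mathcal{G}_\mu^\lambda(x,y;u)=(x/y)^{\mu-\nu}[\mathcal{Q}_\mu(x,y;u)-\mathcal{Q}_\mu\ast q_\nu^x(a,y;u)]$, with $\mathcal{Q}_\mu$ here meaning $\mathcal{Q}_\nu^0$ — I would clarify this notational point in the write-up.

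Next I would pass to Laplace transforms in $u$. Since convolution in $u$ becomes a product, $\mathcal{L}(\mathcal{Q}_\mu\ast q_\nu^x)(r^2/2) = \mathcal{L}\mathcal{Q}_\nu^0(a,y;\cdot)(r^2/2)\cdot\mathcal{L}q_\nu^x(r^2/2)$. From the Proposition, $\mathcal{L}\mathcal{Q}_\nu^0(x,y;\cdot)(r^2/2)=2(xy)^\nu I_\nu(rx)K_\nu(ry)\,m^{(-\nu)}(y)$, evaluated at the two relevant pairs, while \pref{lambda_q} with $\mu$ there equal to $\nu$ gives $\mathcal{L}q_\nu^x(r^2/2)=(x/a)^\nu K_\nu(rx)/K_\nu(ra)$ — wait, more precisely \pref{lambda_q} states $\E^x[e^{-\lambda\tau_a}e^{-(r^2/2)A^{(-\mu)}_{\tau_a}}]=(x/a)^\mu K_\nu(rx)/K_\nu(ra)$, and after using \pref{lambda_density} the clean object is $\E^x[e^{-(r^2/2)A^{(-\nu)}_{\tau_a}}]=(x/a)^\nu K_\nu(rx)/K_\nu(ra)$, which is the Laplace transform of $q_\nu^x$. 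Substituting and simplifying the powers of $a$, $x$, $y$ (the $a^{\pm\nu}$ factors cancel), the bracket becomes $2(xy)^\mu\,m^{(-\mu)}(y)\big[I_\nu(rx)K_\nu(ry) - K_\nu(rx)I_\nu(ra)K_\nu(ry)/K_\nu(ra)\big]$; factoring out $K_\nu(ry)/K_\nu(ra)$ yields the expression $2(xy)^\mu K_\nu(ry)K_\nu(ra)^{-1}S_\nu(rx,ra)\,m^{(-\mu)}(y)$ with $S_\nu(\alpha,\beta)=I_\nu(\alpha)K_\nu(\beta)-K_\nu(\alpha)I_\nu(\beta)$, as claimed. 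The first displayed line of the theorem is just the same computation before factoring.

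The main obstacle I anticipate is justifying the hitting-time decomposition rigorously in the $u$-variable, i.e., that the $\lambda$-potential of the killed process genuinely splits as free potential minus the convolution correction — this requires the strong Markov property together with the observation that $A^{(-\mu)}$ is an additive functional that restarts correctly at $\tau_a$, and care about the event $\{A^{(-\mu)}_{\tau_a}<u\}$ versus the full range of $u$. A secondary, purely bookkeeping hazard is tracking the various powers $(x/y)^{\mu-\nu}$, $(x/a)^{\mu-\nu}$, $m^{(-\mu)}$ versus $m^{(-\nu)}$ consistently; the safest route is to verify everything at the level of Laplace transforms (where the identities are algebraic and the Bessel-product formula from the Proposition does all the work) and then invert, rather than manipulating the convolution directly. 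I would also double-check the normalization $q_\mu^{x,a}(t)=\tfrac12\tilde q_\mu^{x,a}(t/2)$ does not introduce a stray factor when matching \pref{lambda_q}.
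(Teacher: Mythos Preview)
Your proposal is correct and follows essentially the same route as the paper: compute the $\lambda$-harmonic compensator via the strong Markov property at $\tau_a$, use \eqref{lambda_density} to pass from index $\mu$ to $\nu$ (picking up the factor $(x/a)^{\mu-\nu}$), combine with $\mathcal{Q}_\mu^\lambda=(x/y)^{\mu-\nu}\mathcal{Q}_\nu$ from the Proposition, and then take Laplace transforms in $u$ so that the convolution becomes a product and the Bessel-function identities from the Proposition and \eqref{lambda_q} yield $S_\nu$. Your observation that $\mathcal{Q}_\mu$ in the displayed formula should be read as $\mathcal{Q}_\nu$ (i.e.\ $\mathcal{Q}_\nu^0$) is exactly right---the paper's own proof writes $\mathcal{Q}_\nu$ there.
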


\begin{proof}
We begin with the formula for the $\lambda$-harmonic compensator of the process $(A_t^{(-\mu)},\exp(B_t^{(-\mu)}))$, 
with respect to $\tau_a$
\begin{eqnarray*}
 \E^{x}[\exp(-\lambda\tau_a);\, {\mathcal{Q}}_{\mu}^{\lambda}(\exp(B_{\tau_a}^{(-\mu)}),y;A^{(-\mu)}_{\tau_a}+u)] &=&
\(\frac{a}{y}\)^{\mu-\nu} \E^{x}[\exp(-\lambda\tau_a);\, {\mathcal{Q}}_{\nu}(a,y;A^{(-\mu)}_{\tau_a}+u)]\\
 &=&\(\frac{x}{a}\)^{\mu-\nu} \(\frac{a}{y}\)^{\mu-\nu} \int_0^{\infty} {\mathcal{Q}}_{\nu}(a,y;s+u)\,q_{\nu}^{x}(s)\, ds\\	
	 &=&\(\frac{x}{y}\)^{\mu-\nu}\,{\mathcal{Q}}_{\nu}\ast q_{\nu}^{x}(a,y;u)\/.
\end{eqnarray*}
Next we compute the Laplace transform of the compensator with respect to the variable $u$
\begin{eqnarray*}
  \mathcal{L}{\mathcal{Q}}_{\nu}\ast q_{\nu}^{x}(a,y;\cdot)(r^2/2) &=& \mathcal{L}{\mathcal{Q}}_{\nu}(a,y;\cdot)(r^2/2)\cdot
	\mathcal{L}q_{\nu}^{x}(r^2/2)\\
  &=& \frac{2}{y}\(\frac{a}{y}\)^{\nu} I_{\nu}(r a)K_{\nu}(r y)\cdot \(\frac{x}{a}\)^{\nu} \frac{K_{\nu}(rx)}{K_{\nu}(ra)}\\
  &=& \frac{2}{y}\(\frac{x}{y}\)^{\nu} K_{\nu}(rx)K_{\nu}(ry)\frac{I_{\nu}(ra)}{K_{\nu}(ra)}\\
  &=& 2\/\(xy\)^{\nu} K_{\nu}(rx)K_{\nu}(ry)\frac{I_{\nu}(ra)}{K_{\nu}(ra)}\,m^{(-\mu)}(y)\/.
\end{eqnarray*}
The $\lambda$-Green function of the process $(A_t^{(-\mu)},\exp(B_t^{(-\mu)}))$ killed at the first hitting time of the level $a$ has the following form (for $a<x<y$)
\begin{eqnarray*}
  {\mathcal{G}}_{\mu}^{\lambda}(x,y;u) &=& \(\frac{x}{y}\)^{\mu-\nu}[{\mathcal{Q}}_{\nu}(x,y;u)-{\mathcal{Q}}_{\nu}\ast q_{\nu}^{x}(a,y;u)]\\
  &=& \(\frac{x}{y}\)^{\mu-\nu}\int_0^\infty [{\mathcal{Q}}_{\nu}(x,y;u)-{\mathcal{Q}}_{\nu}(a,y;u-s)\textbf{1}_{[0,u)}(s)]q_{\nu}^{x}(s)ds\/.
\end{eqnarray*}
Its Laplace transform is
\begin{eqnarray}
 \label{Green_laplace_0}  
   \int_0^\infty e^{-r^2u/2} {\mathcal{G}}_{\mu}^{\lambda}(x,y;u)\,du &=& \frac{2}{y}\(\frac{x}{y}\)^{\mu} I_{\nu}(r x)\/K_{\nu}(r y)-
	\frac{2}{y}\(\frac{x}{y}\)^{\mu} K_{\nu}(rx)K_{\nu}(ry)\frac{I_{\nu}(ra)}{K_{\nu}(ra)}\\
   &=& 2\(xy\)^{\mu} \((I_{\nu}(r x)\/K_{\nu}(r a)- K_{\nu}(rx)\/I_{\nu}(ra)\)\frac{K_{\nu}(ry)}{K_{\nu}(ra)})\,m^{(-\mu)}(y)\\
   &=& 2\(xy\)^{\mu}\frac{K_{\nu}(ry)}{K_{\nu}(ra)}\,S_{\nu}(rx,ra)\,m^{(-\mu)}(y)\/,
\end{eqnarray}
with $S_{\nu}(\alpha,\beta)=I_{\nu}(\alpha)\,K_{\nu}(\beta)- K_{\nu}(\alpha)\,I_{\nu}(\beta)$, $0<\beta<\alpha$.
\end{proof}
The above-given result immediately implies the following
\begin{corollary}
\begin{eqnarray*}
  {\mathcal{G}}_{\mu}^{\lambda}(x,y;u) = \(\frac{x}{y}\)^{\mu-\nu}{\mathcal{G}}_{\nu}(x,y;u)\,.
\end{eqnarray*}
\end{corollary}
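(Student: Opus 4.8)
The plan is simply to specialize the formula of the preceding theorem to $\lambda=0$ and then match the two expressions. First I would observe that in the theorem the right-hand side depends on the parameters $\mu$ and $\lambda$ only through the combination $\nu=\sqrt{2\lambda+\mu^2}$ inside the bracket $[{\mathcal{Q}}_{\nu}(x,y;u)-{\mathcal{Q}}_{\nu}\ast q_{\nu}^{x}(a,y;u)]$, the dependence on $(\mu,\lambda)$ separately being carried entirely by the scalar prefactor $(x/y)^{\mu-\nu}$. This is transparent from the proof of the theorem, where $q_{\nu}^{x}$ is already the functional density attached to the index $-\nu$, and ${\mathcal{Q}}_{\nu}$ (with $\lambda=0$) appears rather than ${\mathcal{Q}}_{\mu}^{\lambda}$ after the identity ${\mathcal{Q}}_{\mu}^{\lambda}(x,y;u)=(x/y)^{\mu-\nu}{\mathcal{Q}}_{\nu}(x,y;u)$ from the Proposition is invoked.

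Next I would apply the theorem in the particular case $\mu=\nu$, $\lambda=0$. Then $\sqrt{2\lambda+\mu^2}=\nu$, so the prefactor $(x/y)^{\mu-\nu}$ equals $1$ and (recalling that ${\mathcal{G}}_{\nu}^{0}={\mathcal{G}}_{\nu}$ by our convention) the theorem yields
\begin{equation*}
{\mathcal{G}}_{\nu}(x,y;u)={\mathcal{Q}}_{\nu}(x,y;u)-{\mathcal{Q}}_{\nu}\ast q_{\nu}^{x}(a,y;u)\/,\qquad a<x<y\/.
\end{equation*}
In other words, the bracket that occurs in the general formula of the theorem is exactly ${\mathcal{G}}_{\nu}(x,y;u)$.

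Finally I would substitute this identity back into the general formula
\begin{equation*}
{\mathcal{G}}_{\mu}^{\lambda}(x,y;u)=\(\frac{x}{y}\)^{\mu-\nu}\bigl[{\mathcal{Q}}_{\nu}(x,y;u)-{\mathcal{Q}}_{\nu}\ast q_{\nu}^{x}(a,y;u)\bigr]
\end{equation*}
to obtain ${\mathcal{G}}_{\mu}^{\lambda}(x,y;u)=(x/y)^{\mu-\nu}{\mathcal{G}}_{\nu}(x,y;u)$, which is the assertion. There is essentially no obstacle here: the only point requiring care is the bookkeeping in the first step, namely confirming that the bracket in the theorem genuinely involves only the index $\nu$, so that the substitution $(\mu,\lambda)\mapsto(\nu,0)$ is legitimate; this follows at once from the construction in the proof of the theorem.
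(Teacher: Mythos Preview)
Your proposal is correct and follows exactly the same route as the paper, which simply states that the corollary is an immediate consequence of the preceding theorem. You have merely spelled out the one-line observation in full: the bracket in the theorem's formula depends on the parameters only through $\nu$, so specializing to $(\mu,\lambda)=(\nu,0)$ identifies it with $\mathcal{G}_\nu(x,y;u)$, and the general formula then reads $\mathcal{G}_\mu^\lambda(x,y;u)=(x/y)^{\mu-\nu}\mathcal{G}_\nu(x,y;u)$.
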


%%%%%%%%%%%%%%%%%%%%%%%%%%%%%%%%%%%%%%%%%%%%%%%%%%%%%%%%%%%%%%%%%%%%%%%%%%%%%%%%%%%%%%%%%%%%%%%%%%%%%%%%%%%%%%%%%%%%%%%%%%%%%%%%%%%%%%%%%%%%%%%%%%%%%%%%%%%%%%%%%%%%%%%%%%%%%%%%%%%%%%%%%%%%%%%%%%%%%%%%%%%%%%%%%%%%%%%%%%%%%%%%%%%%%%%%%%%%%%%%%%%%%%%%%%%%%%%%%%%%%%%%%%%%%%%%%%%%%%%%%%%%%%%%%%%%%%%%%%%%%%%%%%%%%%%%%%%%%%%%%%%%%%%%%%%%%%%%%%%%%%%%%%%%%%%%%%%%%%%%%%%%%%%%%%%%%%%%%%%%%%%%%%%%%%%%%%%%%%%%%%%%%%%%%%%%%%%%%%%%%%%%%%%%%

\section{Green function estimates}
\subsection{Hyperbolic Brownian motion in $\H^n$}
We consider the half-space model of the $n$-dimensional real hyperbolic
space
\formula{
\H^n = \{x=(x_1,\ldots, x_n)\in\R^n: x_n>0\}, \quad n\geq2 \/,
}
with the Riemannian metric 
\formula{
  ds^2 = \frac{dx_1^2+\ldots+dx_{n-1}^2+dx_n^2}{x_n^2} \/.
}
The metric induces the hyperbolic distance on $\H^n$ described by 
\formula[eq:distance:formula]{
    \cosh(d_{\H^n}(x,y)) = 1+\frac{|x-y|^2}{2x_ny_n}\/,\quad x,y\in\H^n
}
and the corresponding canonical (hyperbolic) volume element 
\formula{
dV_n=\frac{dx_1\ldots dx_{n-1}dx_n}{x_n^{n}} \/,
}
where $dx_1\ldots dx_{n-1}dx_n$ denotes the Lebesgue measure in $\R^n$. The Laplace-Beltrami operator takes then the following form
\formula{
  \Delta_{\H^n} = x_n^2\sum_{k=1}^n \dfrac{\partial^2}{\partial x_k^2} - (n-2)x_n \dfrac{\partial}{\partial x_n}\/.
}
We define the hyperbolic Brownian motion (HBM) $X(t)=(X_1(t),\ldots,X_n(t))$ as a diffusion on $\H^n$ with the generator $\frac12 \Delta_{\H^n}$. Moreover, we introduce the hyperbolic Brownian motion with drift, i.e. the diffusion $X^{\nu}(t)=(X_1^{\nu},\ldots,X_n^{\nu}(t))$ on $\H^n$ having the half of the operator
\formula{
  \Delta_{\nu} = x_n^2\sum_{k=1}^n \dfrac{\partial^2}{\partial x_k^2} - (2\nu-1) x_n \dfrac{\partial}{\partial x_n}
}
as its generator, where $\nu> 0$.  Notice that 
\formula{
   \frac12 \Delta_\nu = \frac12 \Delta_{\H^n}-(\nu-(n-1)/2)x_n\dfrac{\partial}{\partial x_n}\/,
 }
and consequently, for $\mu=(n-1)/2$ we have $\Delta_{\mu}=\Delta_{\H^n}$ and we go back to HBM, i.e. $X=X^\mu$. We will fix the notation $\mu=(n-1)/2$ in this section.
%%%%%%%%%%%%%%%%%%%%%%%%%%%%%%%%%%%%%%%%%%%%%%%%%%%%%%%%%%%%%%%%%%%%%%%%%%%%%%%%%%%%%%%%%%%%%%%%%%%%%%%%%%%%%%%%%%%%%%%%%%%%%%%%%%%%%%%%%%%%%%%
\subsection{Representations of the hyperbolic Green function of the horocycle}
Our main objective is to study the properties of the Green function and the $\lambda$-Green function of a half-space (or equivalently the interior of the horocycle), i.e. the set 
\formula{
   D = \{x\in \H^n: x_n>a\}\/,
}
where $a>0$. We denote by $\tau_{\nu}$ the first exit time of $X^{\nu}$ from $D$
\formula{
   \tau_\nu = \inf\{t>0: X^\nu(t)\notin D\} = \inf\{t>0: X_n^\nu(t)=a\}\/.
}
Since the last coordinate of the hyperbolic Brownian motion with drift has the same law as the corresponding geometric Brownian motion, we can easily deduce that $\tau_\nu$ is finite almost surely whenever $a>0$. For every $\lambda\geq 0$ we define the $\lambda$-Green function $G^{\lambda}_\nu(x,y)$ of $D$ for the hyperbolic Brownian motion with drift $\nu$ as the integral kernel of the Green operator
\formula{
   G^{\lambda}_\nu f(x) = \int_0^\infty e^{-\lambda t}\E[t<\tau_a,f(X^\nu(t))]\,dt = \int_{D_a} f(y)G^{\lambda}_\nu(x,y)dV_n(y)\/,
}
for every $x,y\in D$ and any Borel function $f$, which is non-negative or bounded. We do not indicate the value of $a$ in the notation of the corresponding objects, i.e. $a$ is assumed to be fixed positive number. We also recall the reference measure formula $dV_n(y) = y_n^{-n}d\tilde{y}dy_n=m^{(-\mu)}(dy_n)d\tilde{y}$, where $\mu=(n-1)/2$.
%
%Note that here $G^{\lambda}_\nu(x,y)$ is the integral kernel with respect to the Lebesgue measure and not with respect to the hyperbolic reference measure $y^{-n}dy$. From the one side we loose the symmetric property of $G^{\lambda}_\nu(x,y)$. However, as it is shown below, we will switch between processes of different drifts (additionally depending on $\lambda$) and the one fixed reference measure (Lebesgue measure) will help to avoid confusion. 
The $\lambda$-potential operator $U^{\lambda}_\nu$ and the $\lambda$-potential kernel $U^{\lambda}_\nu(x,y)$ are defined by
\formula{
U^{\lambda}_\nu f(x) = \int_0^\infty e^{-\lambda t}\E[f(X^\nu(t))]\,dt = \int_{D_a} f(y)U^{\lambda}_\nu(x,y)dV_n(y)\/,\quad x,y\in \H^n\/.
}
For $\lambda=0$ we obtain the Green function of $D$ and the potential kernel, which will be simply denoted by $G_\nu(x,y)$ and $U_\nu(x,y)$ respectively. Finally, if $\nu=\mu=(n-1)/2$ we will omit the subscript $\nu$ in the notation, i.e. $G^\lambda(x,y)$ and $G(x,y)$ are the $\lambda$-Green function and the Green function of the set $D$ for the hyperbolic Brownian motion.

The hyperbolic Brownian motion with drift can be represented by the classical Brownian motion with time changed by an integral functional of geometric Brownian motion and the geometric Brownian motion itself. More precisely, there exists the standard Brownian motion $B=(B(t))=(\tilde{B}(t),B_n(t))$ in $\R^n$ starting from $x = (\tilde{x},\log x_n)$, $x_n>0$ such that 
\formula{
X^\nu(t) = (\tilde{X}^\nu(t),X_n^\nu(t)) = (\tilde{B}(A^{(-\nu)}_t),\exp(B_n^{(-\nu)}(t)))\/,
}
where $B_n^{(-\nu)}(t) = B_n(t)-\nu t$, $A^{(-\nu)}_t = \int_0^t \exp(2B_n^{(-\nu)}(s))ds$. Since the processes $\tilde{B}(t)$ and the two-dimensional process $(A^{(-\nu)}_t,\exp(B_n^{(-\nu)}(t)))$ are independent and the first exit time $\tau_\nu$ depends only on the last coordinate of the process, the above-given representation implies the relation between hyperbolic Green function of the horocycle and the $\lambda$-Green function of $(A^{(-\mu)}_t,\exp(B_n^{(-\mu)}(t)))$ introduced in Section \ref{sec:GreenFunctionAB}. More precisely, the $\lambda$-potential operator for the hyperbolic Brownian motion $X$ and non-negative function $f$ can be written as
\formula{
  U^{\lambda} f(x) &= \int_0^\infty e^{-\lambda t} \E^x f(X(t))dt\\
     &= \int_0^\infty e^{-\lambda t} \int_0^\infty \int_0^\infty \E^{\tilde{x}}f(\tilde{B}(w),y_n) P^{x_n}(A_t^{(-\mu)}\in dw,\exp(B_n^{(-\mu)}(t))\in dy_n) dt\/.
		}
Since the integrand is non-negative, we can change the order of integration to get
\formula{
		U^{\lambda}f(x)
     &= \int_0^\infty \int_{\R^{n-1}}\int_0^\infty f(\tilde{y},y_n) \frac{\exp({-\frac{|\tilde{x}-\tilde{y}|^2}{2w}})}{(2\pi w)^{\frac{n-1}{2}}}\int_0^\infty e^{-\lambda t}\pr^{x_n}(A_t^{(-\mu)}\in dw,\exp(B_n^{(-\mu)}(t))\in dy_n)dt\,d\tilde{y}\,\\
		&= \int_{\R^{n}} \int_0^\infty f(y)\frac{\exp({-\frac{|\tilde{x}-\tilde{y}|^2}{2w}})}{(2\pi w)^{\frac{n-1}{2}}} {\mathcal{Q}}_\mu^\lambda(x_n,y_n;w)dw\, dy\/.
}
Consequently, using the relation
\formula{
   {\mathcal{Q}}_\mu^\lambda(x_n,y_n;w) = \left(\frac{x_n}{y_n}\right)^{\mu-\nu}{\mathcal{Q}}_\nu(x_n,y_n;w)
}
and the formula for the hyperbolic volume element we obtain the corresponding formula for the $\lambda$-potential kernel
\formula[hyp:potential:formula]{
   U^\lambda (x,y) =  y_n^{2\mu+1}\left(\frac{x_n}{y_n}\right)^{\mu-\nu}\int_0^\infty \frac{\exp({-\frac{|\tilde{x}-\tilde{y}|^2}{2w}})}{(2\pi w)^{\frac{n-1}{2}}}
	{\mathcal{Q}}_\nu(x_n,y_n;w)dw\/.
}
Similar computation gives the formula for the $\lambda$-harmonic compensator 
\begin{eqnarray*}
  &&\E^{x}[e^{-\lambda\/\tau_\nu}\, U^\lambda(X(\tau_\nu),y)] = y_n^{2\mu+1}\left(\frac{a}{y_n}\right)^{\mu-\nu}\E^{x}\left[\int_0^\infty\/e^{-\lambda\/\tau_\nu}\/ 
	\frac{\exp({-\frac{|{\tilde{B}}(A_{\tau_\nu}^{(-\mu)})-\tilde{y}|^2}{2w}})}{(2\pi w)^{\frac{n-1}{2}}} 
	{\mathcal{Q}}_\nu(a,y_n;w)\,dw\right]\\
  &=& y_n^{2\mu+1}\(\frac{x_n}{y_n}\)^{\mu-\nu}\,\E^{\tilde{x}}\left[\int_0^\infty\/\int_0^\infty 
	\frac{\exp({-\frac{|{\tilde{B}}(s)-\tilde{y}|^2}{2w}})}{(2\pi w)^{\frac{n-1}{2}}} 
	{\mathcal{Q}}_\nu(a,y_n;w)\/q_\nu^{x_n}(s)\,dw\,ds\right]\\
&=& y_n^{2\mu+1}\(\frac{x_n}{y_n}\)^{\mu-\nu}\,\int_0^\infty  \/\int_0^\infty \int_{\R^{n-1}}\frac{\exp({-\frac{|\tilde{z}-\tilde{y}|^2}{2w}})}{(2\pi w)^{\frac{n-1}{2}}}\,\frac{\exp({-\frac{|\tilde{x}-\tilde{z}|^2}{2s}})}{(2\pi s)^{\frac{n-1}{2}}}
 {\mathcal{Q}}_\nu(a,y_n;w) \/ q_\nu^{x_n}(s)\,dw\/ds\/dz\\
&=& y_n^{2\mu+1}\(\frac{x_n}{y_n}\)^{\mu-\nu}\,\int_0^\infty  \/\int_0^\infty 
\frac{\exp({-\frac{|\tilde{x}-\tilde{y}|^2}{2(w+s)}})}{(2\pi( w+s))^{\frac{n-1}{2}}}\,
 {\mathcal{Q}}_\nu(a,y_n;w) \/ q_\nu^{x_n}(s)\,dw\/ds\\
&=&y_n^{2\mu+1}\(\frac{x_n}{y_n}\)^{\mu-\nu}\, \/\int_0^\infty 
\frac{\exp({-\frac{|\tilde{x}-\tilde{y}|^2}{2u}})}{(2\pi u)^{\frac{n-1}{2}}}\,
 {\mathcal{Q}}_\nu \ast q_\nu^{x_n}(a,y_n;u)\,du\/.
\end{eqnarray*}
The above-given relations enable us to provide the following representation formulas for  the $\lambda$-Green function $G^{\lambda}(x,y)$ 
of a half-space for the hyperbolic Brownian motion:
\begin{theorem}
The $\lambda$-Green function of $D$ is given by
\begin{equation}
\label{hyperbolicGreenf}
   G^{\lambda}(x,y) = y_n^{2\mu+1}\int_0^\infty \frac{\exp({-\frac{|\tilde{x}-\tilde{y}|^2}{2w}})}{(2\pi w)^{\frac{n-1}{2}}} 
	{\mathcal{G}}_{\mu}^{\lambda}(x_n,y_n;w)\,dw\/,
\end{equation}
where ${\mathcal{G}}_{\mu}^{\lambda}(x_n,y_n;w)$ is the Green function for the process $(A_t^{(-\mu)},\exp(B_t^{(-\mu)}))$.
\end{theorem}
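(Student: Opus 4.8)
The plan is to derive \pref{hyperbolicGreenf} from the Strong Markov property of the hyperbolic Brownian motion $X=X^\mu$ at its first exit time $\tau$ from $D$, combined with the two ingredients established just above: the representation \pref{hyp:potential:formula} of the $\lambda$-potential kernel $U^\lambda(x,y)$ and the displayed formula for the $\lambda$-harmonic compensator $\E^x[e^{-\lambda\tau}U^\lambda(X(\tau),y)]$. Nothing further about the geometry or the driving process is needed; the proof amounts to assembling these two formulas and recognising the kernel that appears on the half-line factor.

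First I would split the time integral defining the $\lambda$-potential at $\tau$. For a non-negative Borel function $f$,
\[
  U^\lambda f(x) = \E^x\!\int_0^\infty\! e^{-\lambda t}f(X(t))\,dt = \E^x\!\int_0^{\tau}\! e^{-\lambda t}f(X(t))\,dt + \E^x\!\int_{\tau}^\infty\! e^{-\lambda t}f(X(t))\,dt,
\]
where the first summand is $G^\lambda f(x)$ by the definition of the $\lambda$-Green operator and, by the Strong Markov property applied at $\tau$ (which is almost surely finite because $\mu=(n-1)/2>0$), the second summand equals $\E^x[e^{-\lambda\tau}U^\lambda f(X(\tau))]$. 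Writing each of the three terms as the integral of $f$ against a kernel with respect to $dV_n$ and using that the resulting identity holds for all non-negative $f$, one obtains the kernel identity
\[
  G^\lambda(x,y) = U^\lambda(x,y) - \E^x[e^{-\lambda\tau}U^\lambda(X(\tau),y)], \qquad x,y\in D.
\]

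Next I would substitute \pref{hyp:potential:formula} and the compensator formula into the right-hand side. Both carry the common prefactor $y_n^{2\mu+1}(x_n/y_n)^{\mu-\nu}$ and the same Gaussian-in-$w$ weight under the $dw$-integral, so the subtraction can be performed inside the integral and gives
\[
  G^\lambda(x,y) = y_n^{2\mu+1}\Big(\frac{x_n}{y_n}\Big)^{\mu-\nu}\int_0^\infty \frac{\exp\big(-\frac{|\tilde x-\tilde y|^2}{2w}\big)}{(2\pi w)^{\frac{n-1}{2}}}\big[\mathcal{Q}_\nu(x_n,y_n;w)-\mathcal{Q}_\nu\ast q_\nu^{x_n}(a,y_n;w)\big]\,dw.
\]
By the Theorem of Section \ref{sec:GreenFunctionAB}, the factor $(x_n/y_n)^{\mu-\nu}$ times the bracket is exactly $\mathcal{G}_\mu^\lambda(x_n,y_n;w)$, which turns the last display into \pref{hyperbolicGreenf}.

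The computation is essentially bookkeeping once the Strong Markov decomposition is in place; the one step that needs a word of justification is the passage from the operator identity $G^\lambda=U^\lambda-\E^x[e^{-\lambda\tau}U^\lambda(X(\tau),\cdot)]$ to the pointwise identity of the kernels. This is legitimate because all three kernels are finite and depend continuously on $y\in D$ for fixed $x\in D$, so an identity valid $dV_n$-almost everywhere holds everywhere; alternatively one can run the whole argument directly at the kernel level, using the representation $X^\mu(t)=(\tilde B(A^{(-\mu)}_t),\exp(B_n^{(-\mu)}(t)))$, the independence of $\tilde B$ from the pair $(A^{(-\mu)},\exp(B_n^{(-\mu)}))$, and the fact that $\tau$ depends only on the last coordinate — which is exactly how the compensator formula above was obtained. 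I do not anticipate any further obstacle.
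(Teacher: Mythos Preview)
Your proposal is correct and follows exactly the route the paper takes: the paper derives \pref{hyp:potential:formula} and the displayed compensator formula, then states that ``the above-given relations enable us to provide'' the theorem, leaving the Hunt-type subtraction $G^\lambda=U^\lambda-\E^x[e^{-\lambda\tau}U^\lambda(X(\tau),\cdot)]$ and the identification with $\mathcal{G}_\mu^\lambda$ implicit. You have simply made those steps explicit.
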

The above-given formula together with the relation between a Bessel process and a geometric Brownian motion established by Lamperti relation lead to the following result, which will be crucial in estimating the $\lambda$-Green function.
\begin{theorem}
\formula[eq:GreenBessel]{
  G^{\lambda}(x,y) = \(x_ny_n\)^{\mu-\nu}\, \int_0^\infty \frac{1}{(2\pi u)^{\frac{n-1}{2}}} 
	\exp\left({-\frac{|\tilde{x}-\tilde{y}|^2}{2u}}\right)p_a^{(-\nu)}(u,x_n,y_n)\,du \/,
}
where $p_a^{(-\nu)}(u,x_n,y_n)$ is the density function (with respect to the speed measure $m^{(-\nu)}(dy_n)=y_n^{-2\nu+1}dy_n$) of the transition probability of the Bessel process $BES^{(-\nu)}(x_n)$ killed at the first hitting time $T_a$.
\end{theorem}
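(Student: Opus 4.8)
The plan is to derive \eqref{eq:GreenBessel} from the representation \eqref{hyperbolicGreenf} of the preceding theorem together with the Corollary of Section~\ref{sec:GreenFunctionAB} and the Lamperti representation. By that Corollary, $\mathcal{G}_\mu^\lambda(x_n,y_n;w) = (x_n/y_n)^{\mu-\nu}\,\mathcal{G}_\nu(x_n,y_n;w)$, where $\mathcal{G}_\nu=\mathcal{G}_\nu^0$ is the ordinary ($\lambda=0$) Green function of the two--dimensional process $(A_t^{(-\nu)},\exp(B_t^{(-\nu)}))$ killed at $\tau_\nu$ (the first hitting time of level $a$ by $\exp(B^{(-\nu)})$), written — like $\mathcal{Q}_\mu^\lambda$ in \eqref{Qpotential} — as a density with respect to $dy\,dw$. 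Inserting this into \eqref{hyperbolicGreenf} and using the elementary identity $y_n^{2\mu+1}(x_n/y_n)^{\mu-\nu}y_n^{-2\nu-1}=(x_ny_n)^{\mu-\nu}$, the assertion \eqref{eq:GreenBessel} reduces to the single identity
\begin{equation*}
  \mathcal{G}_\nu(x,y;w) = y^{-2\nu-1}\,p_a^{(-\nu)}(w,x,y)\/,\qquad a<x<y\/,
\end{equation*}
with $p_a^{(-\nu)}(w,x,y)$ the transition density of $BES^{(-\nu)}(x)$ killed at $T_a$ taken with respect to $m^{(-\nu)}(dy)=y^{-2\nu+1}\,dy$ (the case $x>y$ then follows from the symmetry of both sides of \eqref{eq:GreenBessel}).

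To prove this identity I would invoke the Lamperti representation: with $R$ a $BES^{(-\nu)}(x)$ and $A_t^{(-\nu)}=\int_0^t\exp(2B_s^{(-\nu)})\,ds$ its Lamperti clock one has $\exp(B_t^{(-\nu)})=R_{A_t^{(-\nu)}}$; since $\tau_\nu<\infty$ almost surely, the map $t\mapsto A_t^{(-\nu)}$ is a.s.\ a strictly increasing continuous bijection of $[0,\tau_\nu)$ onto $[0,T_a)$ with $T_a=A_{\tau_\nu}^{(-\nu)}$ and $\frac{d}{dt}A_t^{(-\nu)}=\exp(2B_t^{(-\nu)})=R_{A_t^{(-\nu)}}^2$. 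Hence, for any nonnegative Borel $g$, the substitution $w=A_t^{(-\nu)}$ gives the pathwise identity
\begin{equation*}
  \int_0^{\tau_\nu}g\big(A_t^{(-\nu)},\exp(B_t^{(-\nu)})\big)\,dt=\int_0^{T_a}g(w,R_w)\,R_w^{-2}\,dw\/.
\end{equation*}
Taking expectations (on the left, for $\exp(B^{(-\nu)})$ started at $x$; after the time change, the Bessel expectation $\E_x^{(-\nu)}$), the left-hand side equals $\int_0^\infty\int_a^\infty g(w,y)\,\mathcal{G}_\nu(x,y;w)\,dy\,dw$ by the definition of $\mathcal{G}_\nu$, while the right-hand side equals $\int_0^\infty\E_x^{(-\nu)}\big[w<T_a;\,g(w,R_w)R_w^{-2}\big]\,dw=\int_0^\infty\int_a^\infty g(w,y)\,y^{-2}\,p_a^{(-\nu)}(w,x,y)\,m^{(-\nu)}(dy)\,dw$ by the definition of $p_a^{(-\nu)}$. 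Since $y^{-2}m^{(-\nu)}(dy)=y^{-2\nu-1}\,dy$, comparing integrands yields the identity, and hence \eqref{eq:GreenBessel} after renaming $w$ as $u$.

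The step I expect to demand the most care is exactly this time-change. One must check that $t\mapsto A_t^{(-\nu)}$ sweeps out $[0,T_a)$ precisely once — no portion of the Bessel trajectory before $T_a$ omitted and nothing after $T_a$ counted — which rests on $\tau_\nu<\infty$ a.s.\ together with the continuity and strict monotonicity of $A^{(-\nu)}$; and one must see that the Jacobian $R_w^{-2}$ is precisely the factor converting the speed measure $y^{-2\nu-1}dy$ of the geometric Brownian motion $\exp(B^{(-\nu)})$ into the speed measure $y^{-2\nu+1}dy$ of $BES^{(-\nu)}$. The remaining steps — Fubini, legitimate since every integrand is nonnegative, and the bookkeeping of the powers of $y_n$ — are routine.

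Finally, as an independent cross-check one may verify the displayed identity by Laplace transforms. By the preceding theorem the Laplace transform in $w$ of $y_n^{2\mu+1}\mathcal{G}_\mu^\lambda(x_n,y_n;\cdot)$ at $r^2/2$ is $2(x_ny_n)^\mu\,\frac{K_\nu(ry_n)}{K_\nu(ra)}\,S_\nu(rx_n,ra)$. On the other side, the $r^2/2$-resolvent of $BES^{(-\nu)}$ killed at $T_a$ is built from the two solutions $y^\nu K_\nu(ry)$ (decreasing at infinity) and $y^\nu S_\nu(ry,ra)$ (vanishing at $y=a$), with constant Wronskian $K_\nu(ra)$ relative to the scale density $y^{2\nu-1}$; for $a<x<y$ it equals $2(xy)^\nu\,\frac{K_\nu(ry)}{K_\nu(ra)}\,S_\nu(rx,ra)$ (the factor $2$ coming from the normalization of $m^{(-\nu)}$). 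Multiplying the latter by $(x_ny_n)^{\mu-\nu}$ reproduces the former, confirming \eqref{eq:GreenBessel}.
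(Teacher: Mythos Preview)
Your argument is correct and follows essentially the same route as the paper: both use the Corollary $\mathcal{G}_\mu^\lambda=(x_n/y_n)^{\mu-\nu}\mathcal{G}_\nu$ together with \eqref{hyperbolicGreenf}, and then perform the Lamperti time change $w=A_t^{(-\nu)}$, $dt=R_w^{-2}\,dw$, mapping $\tau_\nu$ to $T_a$ and identifying the resulting occupation density with the killed Bessel kernel. The only cosmetic difference is that you isolate the one-dimensional identity $\mathcal{G}_\nu(x,y;w)=y^{-2\nu-1}p_a^{(-\nu)}(w,x,y)$ and then substitute, whereas the paper carries the full test function $f(\tilde y,y_n)$ through the time change in one pass; your Laplace-transform cross-check is an additional (and welcome) consistency verification not present in the paper.
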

\begin{proof}
From the formula (\ref{hyperbolicGreenf}) and the fact that 
\formula{
   \mathcal{G}_\mu^\lambda(x_n,y_n;w) = \left(\frac{x_n}{y_n}\right)^{\mu-\nu}\mathcal{G}_\nu(x_n,y_n;w)
}
we obtain 
\formula{
  G^{\lambda}(x,y) = \(\frac{x_n}{y_n}   \)^{\mu-\nu}G_\nu(x,y)\,\quad x,y\in D\/,
}
where $\nu=\sqrt{2\lambda+\mu^2}$ and $\mu=(n-1)/2$. To prove our theorem, we use the following fact: after changing variables in the geometric Brownian motion according to the formula $t=\alpha_u$ where $\alpha_u= \inf\{s>0;A_s^{(-\nu)}>u\}$ we obtain that
$\exp(B_{\alpha_u}^{(-\nu)})=BES^{(-\nu)}$ is a Bessel process with index $(-\nu)$, with $A_s^{(-\nu)}$ as before. 
Applying this relation we obtain that $T_a = \inf\{s>0; \exp(B_{\alpha_u}^{(-\nu)}) =a \}$
is the first hitting time of the Bessel process $BES^{(-\nu)}$ of the level $a$ and $\tau_a=\alpha_{T_a}$. Applying these relations, 
we obtain for a positive Borel function $f$: 
\begin{eqnarray*}
G_\nu\/f(x) &=& \E^{x}\int_0^{\infty} {\bf{1}}_{\{t<\tau_a\}}\,f({\tilde{B}}_{A_t^{(-\nu)}}, \exp(B_n^{(-\nu)}(t)))\,dt\\
 &=& \E^{\tilde{x}}\E_{x_n}^{(-\nu)}\int_0^{\infty} {\bf{1}}_{\{u<T_a\}}\,f({\tilde{B}}_{u}, R_u(t))\,\frac{du}{R_u^2}\\
&=& \int_0^{\infty}\int_0^{\infty}\E^{\tilde{x}}[\,f({\tilde{B}}_{u},y_n)]\,p_a^{(-\nu)}(u,x_n,y_n)\,\frac{dy_n}{y_n^2y_n^{2\nu-1}}\,du\\
&=&   \int_{\R^{n}}\,\int_0^{\infty}\,\frac{1}{(2\pi u)^{\frac{n-1}{2}}} 
	\exp\left({-\frac{|\tilde{x}-\tilde{y}|^2}{2u}}\right)\,p_a^{(-\nu)}(u,x_n,y_n)\,y_n^{2\mu-2\nu}\,f({\tilde{y}},y_n)\,du\,dV_n(y)\\
\end{eqnarray*}
Collecting all together provides the result.

\end{proof}
\subsection{Sharp estimates of Green functions}
\label{sec:GreenFunction}
We begin with providing the two-sided uniform estimates of the $\lambda$-potential kernels.
\begin{proposition}
  For $n\geq 3$ we have
  \begin{eqnarray*}
      U^{\lambda}(x,y) \approx  \left(\frac{2x_ny_n}{|x-y|^2}\right)^{\mu-1/2}\left(1\wedge\frac{2x_ny_n}{|x-y|^2}\right)^{\nu+1/2}\/,\quad x,y\in\H^n\/,
  \end{eqnarray*}
  where $\mu=\frac{n-1}{2}$ and $\nu=\sqrt{2\lambda+\mu^2}$. Moreover, if $n=2$ we have
    \begin{eqnarray*}
      U^{\lambda}(x,y) \approx \left(1\wedge\frac{2x_ny_n}{|x-y|^2}\right)^{\nu+1/2}\/,\quad x,y\in\H^2\/.
  \end{eqnarray*}
\end{proposition}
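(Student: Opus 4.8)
The plan is to collapse the multiple integral defining $U^\lambda$ to a one-dimensional one that depends on $x,y$ only through the hyperbolic distance, and then to estimate that integral by splitting it at $u=1$ and inserting the elementary two-sided bounds for $I_\nu$. Starting from \pref{hyp:potential:formula} and substituting the closed formula for $\mathcal{Q}_\nu$ coming from \pref{Qpotential} with $\lambda=0$, namely $\mathcal{Q}_\nu(s,t;w)=t^{-1}(s/t)^\nu\exp(-(s^2+t^2)/(2w))I_\nu(st/w)\,w^{-1}$, and using $|\tilde x-\tilde y|^2+x_n^2+y_n^2=|x-y|^2+2x_ny_n$ together with $\mu=(n-1)/2$, the change of variables $u=x_ny_n/w$ makes all powers of $x_n,y_n$ cancel and gives
\formula{
 U^\lambda(x,y)=\frac{1}{(2\pi)^{(n-1)/2}}\int_0^\infty u^{\mu-1}\,e^{-u/z}\,I_\nu(u)\,du, \qquad z:=\frac{2x_ny_n}{|x-y|^2+2x_ny_n}\in(0,1).
}
By \pref{eq:distance:formula} we have $1/z=\cosh d_{\H^n}(x,y)$, so this confirms a priori (via isometry invariance) that $U^\lambda$ is a function of the hyperbolic distance alone.

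It remains to estimate $J(z):=\int_0^\infty u^{\mu-1}e^{-u/z}I_\nu(u)\,du$. Writing $\xi:=2x_ny_n/|x-y|^2$ so that $1/z=1+1/\xi$, the target is $J(z)\approx\xi^{\mu-1/2}(1\wedge\xi)^{\nu+1/2}$. Split $J=\int_0^1+\int_1^\infty$. On $(0,1]$, \pref{asympt_I_zero} together with positivity and continuity of $I_\nu$ gives $I_\nu(u)\approx u^\nu$, so $\int_0^1\approx\int_0^1 u^{\mu+\nu-1}e^{-u/z}\,du$; a scaling substitution and $\mu+\nu>0$ yield $\int_0^1\approx 1\wedge\xi^{\mu+\nu}=(1\wedge\xi)^{\mu+\nu}$. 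On $[1,\infty)$, \pref{asympt_I_infty} gives $I_\nu(u)\approx u^{-1/2}e^u$, so after the cancellation $e^{-u/z}e^u=e^{-u/\xi}$ we get $\int_1^\infty\approx\int_1^\infty u^{\mu-3/2}e^{-u/\xi}\,du$; for $\xi\le1$ this is at most a constant times $e^{-1/(2\xi)}$ and is therefore dominated by $\int_0^1$, while for $\xi\ge1$ a scaling substitution together with $\mu>1/2$ (valid for $n\ge3$) yields $\int_1^\infty\approx\xi^{\mu-1/2}$. Consequently $J\approx\xi^{\mu+\nu}$ for $\xi\le1$ and $J\approx1+\xi^{\mu-1/2}\approx\xi^{\mu-1/2}$ for $\xi\ge1$, and in both regimes this is precisely $\xi^{\mu-1/2}(1\wedge\xi)^{\nu+1/2}$. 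Substituting back into the display above gives the proposition for $n\ge3$; the case $n=2$ (where $\mu=1/2$) is treated by the same splitting.

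I do not expect a genuine obstacle here; the effort is in the bookkeeping. What needs care is the uniformity of all implicit constants --- they must depend only on $n$ and $\lambda$, equivalently on $\mu$ and $\nu$ --- throughout the case distinction in $\xi$, and in particular the borderline exponent $\mu-3/2=-1/2$ occurring at $n=3$, for which $\int_\beta^\infty v^{-1/2}e^{-v}\,dv$ must be bounded above and below uniformly for $\beta\in(0,1]$ (it is, being a convergent integral). One could alternatively avoid the splitting altogether: the integral in the first display is classical (see \cite{Erdelyi:1954}) and equals a constant multiple of $z^{\mu+\nu}\,{}_2F_1\big((\mu+\nu)/2,(\mu+\nu+1)/2;\nu+1;z^2\big)$, after which the two regimes follow from ${}_2F_1(\,\cdot\,;0)=1$ and the known behaviour of ${}_2F_1$ as its argument tends to $1$; but the splitting argument is more self-contained and is the route I would write up.
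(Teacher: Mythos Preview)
Your argument is essentially the paper's: both split the integral at the point where the argument of $I_\nu$ equals $1$, insert the small- and large-argument asymptotics of $I_\nu$, and reduce to incomplete-gamma estimates. Your preliminary substitution $u=x_ny_n/w$, collapsing everything to a function of $\cosh d_{\H^n}(x,y)$ before splitting, is a clean organizational touch, but the split and the estimates are the same as the paper's (which instead substitutes $s=|x-y|^2/(2u)$ piece by piece after splitting at $u=x_ny_n$).

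One caveat: your final clause, that $n=2$ ``is treated by the same splitting,'' is too quick. With $\mu=1/2$ the tail piece becomes $\int_1^\infty u^{-1}e^{-u/\xi}\,du\approx\ln\xi$ for large $\xi$, so the asserted bound $U^\lambda\approx 1$ near the diagonal does not follow from this argument; indeed the $\lambda$-potential in $\H^2$ has a logarithmic singularity there. The paper's own proof shares this defect (its incomplete-gamma estimate $\int_a^\infty s^{\alpha}e^{-s}\,ds\approx(a+1)^\alpha e^{-a}$ breaks down at $\alpha=n/2-2=-1$), so this is an issue with the two-dimensional statement itself rather than with your method.
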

\begin{proof}

  Using the formula (\ref{hyp:potential:formula}) together with the asymptotic description of ${\mathcal{Q}}_\mu^{\lambda}(x_n,y_n;t)$ given in \eqref{asympt_I_zero} and \eqref{asympt_I_infty} and the fact that $\mu=\frac{n-1}{2}$, we get
  \begin{eqnarray*}
     U^{\lambda}(x,y) \approx (x_ny_n)^{\mu-1/2}\int_0^{x_ny_n} \frac{1}{u^{n/2}}e^{-\frac{|x-y|^2}{2u}}du+
     (x_ny_n)^{\mu+\nu}\int_{x_ny_n}^{\infty} \frac{1}{u^{\nu+\mu+1}}e^{-\frac{|x-y|^2}{2u}}du\/.
  \end{eqnarray*}
  Making the substitution $\frac{|x-y|^2}{2u}=s$ in both integrals, we arrive at
  \begin{eqnarray*}
      U^{\lambda}(x,y) \approx \left(\frac{2x_ny_n}{|x-y|^2}\right)^{\mu-1/2} \int_{\frac{|x-y|^2}{2x_ny_n}}^\infty s^{n/2-2}e^{-s}ds
      +\left(\frac{2x_ny_n}{|x-y|^2}\right)^{\mu+\nu}\int_0^{\frac{|x-y|^2}{2x_ny_n}}s^{\mu+\nu-1}e^{-s}ds\/.
  \end{eqnarray*}
  Using the following asymptotic description of incomplete gamma functions (see Lemma 12 in \cite{BMR3:2013})
  \begin{eqnarray*}
     \int_0^b s^{\alpha}e^{-s}ds\approx (1\wedge b)^{\alpha+1}\/,\quad b>0\/,\\
     \int_a^\infty s^{\alpha}e^{-s}ds \approx (a+1)^\alpha e^{-a}\/,\quad a>0\/,
  \end{eqnarray*}
  we obtain that for $\frac{|x-y|^2}{2x_ny_n}\geq 1$ that
  \begin{eqnarray*}
     U^{\lambda}(x,y) \approx \frac{2x_ny_n}{|x-y|^{2}}e^{-\frac{|x-y|^2}{2x_ny_n}}+\left(\frac{2x_ny_n}{|x-y|^2}\right)^{\mu+\nu}\approx \left(\frac{2x_ny_n}{|x-y|^2}\right)^{\mu+\nu}\/.
  \end{eqnarray*}
  Finally, whenever $\frac{|x-y|^2}{2x_ny_n}\leq 1$ we have
  \begin{eqnarray*}
     U^{\lambda}(x,y) \approx \left(\frac{2x_ny_n}{|x-y|^2}\right)^{\mu-1/2}+1\/.
  \end{eqnarray*}
  Note that if $n\geq 3$ then the potential behaves as the first summand and in the case $n=2$ as the other one. This ends the proof.
\end{proof}
%%%%%%%%%%%%%%%%%%%%%%%%%%%%%%%%%%%%%%%%%%%%%%%%%%%%%%%%%%%%%%%%%%%%%%%%%%%%%%%%%%%%%%%%%%%%%%%%%%%%%%%%%%%%%%%%%%%%%%%%%%%%%%%%%%%%%%%%%%%%%%%%%%%%%%%%%%%%%%%%%%%%%%%%%%%

\begin{theorem}
For every $\mu=\frac{n-1}{2}, \ n>2$, $\lambda\geq 0$ and $a\geq 0$ we have
\formula{
G^{\lambda}(x,y)\stackrel{\lambda,n}{\approx} \left(\frac{2x_ny_n}{|x-y|^{2}}\right)^{\mu-1/2}\left(1\wedge\frac{2(x_n-a)(y_n-a)}{|x-y|^2}\right)\left(1\wedge\frac{2x_ny_n}{|x-y|^2}\right)^{\nu-1/2}\/,
}
where $x=(\tilde{x},x_n), \ y=(\tilde{y},y_n)$, $x_n,y_n>a$ and $\nu=\sqrt{2\lambda+\mu^2}$. 
\end{theorem}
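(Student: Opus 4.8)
The plan is to start from the representation \eqref{eq:GreenBessel}, which expresses $G^\lambda$ as a $\mu-\nu$ power of $x_ny_n$ times an integral of a Gaussian against the Bessel heat kernel $p_a^{(-\nu)}(u,x_n,y_n)$ (with respect to the speed measure), and to combine it with the sharp two-sided estimates of $p_a^{(-\nu)}$ from \cite{BogusMalecki:2014}. First I would recall (or quote) the precise form of those Bessel heat-kernel estimates: for $x_n,y_n>a$ one has, up to constants depending on $\nu$, a comparison of $p_a^{(-\nu)}(u,x_n,y_n)$ with an expression of the shape $u^{-1/2}\,(\text{ratio factor involving }(x_n-a)(y_n-a))\,(\text{factor involving }x_ny_n)\,\exp(-(x_n-y_n)^2/2u)$, with the behaviour of the $\nu$-dependent factor switching according to whether $x_ny_n/u$ is large or small. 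The key algebraic point is that after substituting $|\tilde x-\tilde y|^2$ from the $(n-1)$-dimensional Gaussian and $(x_n-y_n)^2$ from the heat kernel, the product of exponentials becomes $\exp(-|x-y|^2/2u)$ where $|x-y|^2=|\tilde x-\tilde y|^2+(x_n-y_n)^2$ is the Euclidean distance in $\R^n$; this is what allows the final answer to be phrased purely in terms of $|x-y|^2$ and $x_ny_n$.

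Next I would split the $u$-integral at the natural scale, which turns out to be $u\asymp x_ny_n$ (the same threshold that appeared in the proof of the $\lambda$-potential estimate), because that is where the $I_\nu$-type factor in $p_a^{(-\nu)}$ changes regime. On each piece the integrand is (up to constants) a pure power of $u$ times $\exp(-|x-y|^2/2u)$ times a bounded correction coming from the $(1\wedge\cdot)$ factors, so after the substitution $|x-y|^2/2u=s$ each integral becomes an incomplete gamma integral. I would then invoke exactly the two incomplete-gamma asymptotics already used in the preceding proposition, namely $\int_0^b s^{\alpha}e^{-s}\,ds\approx (1\wedge b)^{\alpha+1}$ and $\int_a^\infty s^{\alpha}e^{-s}\,ds\approx (1+a)^\alpha e^{-a}$, with the cut-off parameter $b=a=|x-y|^2/(2x_ny_n)$. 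The bookkeeping then produces, in the regime $|x-y|^2\ge 2x_ny_n$, a dominant term of order $(2x_ny_n/|x-y|^2)^{\mu+\nu}$ times the $(x_n-a)(y_n-a)$ correction, and in the regime $|x-y|^2\le 2x_ny_n$ a term of order $(2x_ny_n/|x-y|^2)^{\mu-1/2}$ times that correction; recombining the two regimes into a single formula gives precisely
\[
\left(\frac{2x_ny_n}{|x-y|^{2}}\right)^{\mu-1/2}\left(1\wedge\frac{2x_ny_n}{|x-y|^2}\right)^{\nu-1/2}
\]
times the factor $1\wedge\frac{2(x_n-a)(y_n-a)}{|x-y|^2}$, which is the claimed right-hand side. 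Multiplying back by the prefactor $(x_ny_n)^{\mu-\nu}$ from \eqref{eq:GreenBessel} must be absorbed correctly: I expect it to combine with the powers of $x_ny_n$ coming out of the heat-kernel factor so that only the stated ratios survive, and checking this cancellation is a necessary sanity step.

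The main obstacle I anticipate is the careful handling of the $(x_n-a)(y_n-a)$ (i.e. distance-to-the-boundary) factor: unlike $x_ny_n$ and $|x-y|$, this quantity interacts with the $u$-integral in a way that is not a clean power, because the Bessel heat kernel of a killed process has the extra ``boundary'' suppression $1\wedge \tfrac{(x_n-a)(y_n-a)}{u}$ (or a comparable expression), and one has to verify that integrating this against the rest still produces the single global factor $1\wedge\frac{2(x_n-a)(y_n-a)}{|x-y|^2}$ rather than something scale-dependent. Concretely, one must check separately the sub-cases where $(x_n-a)(y_n-a)$ is comparable to, or much smaller than, $x_ny_n$, and see that in each the $u$-integral localizes at the right scale so that the boundary factor comes out evaluated at $u\asymp |x-y|^2$. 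A second, more routine, obstacle is that the estimates of $p_a^{(-\nu)}$ in \cite{BogusMalecki:2014} are themselves stated by cases (depending on the relative sizes of $x_n-a$, $y_n-a$, $\sqrt u$, etc.), so the proof is necessarily a somewhat tedious enumeration of regimes; the role of the half-space geometry is simply to reduce all of this, via \eqref{eq:distance:formula} and the identity $|x-y|^2=|\tilde x-\tilde y|^2+(x_n-y_n)^2$, to the one-dimensional Bessel estimate plus the Gaussian convolution, so no new probabilistic input is needed beyond what is already quoted.
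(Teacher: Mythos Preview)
Your proposal is correct and uses the same ingredients as the paper: the representation \eqref{eq:GreenBessel}, the Bessel heat-kernel estimate \eqref{eq:pt1:estimates}, the observation that the two exponentials combine into $\exp(-|x-y|^2/2u)$, and the incomplete-gamma asymptotics. The one organizational difference is in the decomposition of the $u$-integral. You propose a single cut at $u\asymp x_ny_n$ (the threshold coming from the $I_\nu$ regime change) and then a separate case analysis to extract the boundary factor $1\wedge\frac{2(x_n-a)(y_n-a)}{|x-y|^2}$; you correctly flag this as the main obstacle. The paper instead notices that the estimate \eqref{eq:pt1:estimates} carries \emph{two} $(1\wedge\cdot)$ factors with thresholds at $(x_n-a)(y_n-a)$ and at $x_ny_n$, and accordingly splits the integral into \emph{three} pieces $J_1,J_2,J_3$ at both thresholds. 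After the substitution $w=|x-y|^2/(2t)$ each $J_i$ is a pure incomplete gamma integral with no residual $(1\wedge\cdot)$ inside, and the proof reduces to checking, in each of the three ranges determined by where $|x-y|^2$ falls relative to $2(x_n-a)(y_n-a)$ and $2x_ny_n$, which $J_i$ dominates. This three-way split is exactly the device that resolves your anticipated obstacle cleanly, so you may want to adopt it; otherwise your plan and the paper's are the same.
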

\begin{proof}
The scaling property of (HBM) gives
\formula{
  G^{\lambda}(ax,ay) = {\tilde{G}}^{\lambda}(x,y)\/,\quad x_n,y_n>1
}
where ${\tilde{G}}_{\H^n}^{\lambda}$ denotes the $\lambda$-Green function for the set $D_1$. Consequently, we will further  consider $a=1$ and omit the sign $\,{\tilde{}}\,$.  We recall the recent result giving the sharp estimates of the transition density of the Bessel process killed when leaving the half-line $(a,\infty)$, i.e. it was shown in \cite{BogusMalecki:2014} that for every $\nu\neq 0$ we have
\formula[eq:pt1:estimates]{
p_a^{(-\nu)}(t,x_n,y_n)\stackrel{\nu}{\approx}\left(1\wedge\frac{(x_n-1)(y_n-1)}{t}\right)\left(1\wedge\frac{x_n y_n}{t}\right)^{\nu-1/2}\left(x_ny_n\right)^{\nu-1/2}\frac{1}{\sqrt{t}}\exp{\left(-\frac{(x_n-y_n)^2}{2t}\right)}\/,
}
whenever $t>0$ and $x_n,y_n>1$. Note that in \cite{BogusMalecki:2014} the density was considered with respect to the Lebesgue measure and here the reference measure is $m(dy_n)=y_n^{-2\mu+1}\,dy_n$. Splitting the integral (\ref{eq:GreenBessel}) representing $G_\mu^\lambda(x,y)$ into three parts we get
\formula{
G^{\lambda}(x,y)&= \left(\int_{0}^{(x_n-1)(y_n-1)}+\int_{(x_n-1)(y_n-1)}^{x_ny_n}+\int_{x_ny_n}^\infty\right) \frac{(x_ny_n)^{\mu-\nu}}{(2\pi t)^{(n-1)/2}}\exp\left(-\frac{|\tilde{x}-\tilde{y}|^2}{2t}\right)p_t^{(-\nu)}(x_n,y_n)\,dt\\
& \approx J_1(x,y)+J_2(x,y)+J_3(x,y)\/,
}
where, by substituting $w=|x-y|^2/(2t)$ we get
\formula[eq:J1:formula]{
\nonumber
J_1(x,y) & = (x_ny_n)^{\mu-1/2}\int_0^{(x_n-1)(y_n-1)}t^{-n/2}\exp\left(-\frac{|x-y|^2}{2t}\right)dt \\
&= \left(\frac{2x_ny_n}{|x-y|^2}\right)^{n/2-1} \int_{\frac{|x-y|^2}{2(x_n-1)(y_n-1)}}^\infty w^{n/2-2}e^{-w}dw
}
and similarly 
\formula[eq:J2:formula]{
J_2(x,y) &= \left(\frac{2x_ny_n}{|x-y|^2}\right)^{n/2-1}\frac{2(x_n-1)(y_n-1)}{|x-y|^2}\int_{\frac{|x-y|^2}{2x_ny_n}}^{\frac{|x-y|^2}{2(x_n-1)(y_n-1)}}w^{n/2-1}e^{-w}dw\/,
}
\formula[eq:J3:formula]{
J_3(x,y) &= \left(\frac{2x_ny_n}{|x-y|^2}\right)^{\mu+\nu-1}\frac{2(x_n-1)(y_n-1)}{|x-y|^2} \int_0^{\frac{|x-y|^2}{2x_ny_n}}w^{\mu+\nu-1}e^{-w}dw\/.
}
Note that if $|x-y|^2<2(x_n-1)(y_n-1)$ then the integral in (\ref{eq:J1:formula}) behaves like a constant and 
\formula{
J_1(x,y) \approx \left(\frac{2x_ny_n}{|x-y|^2}\right)^{n/2-1}\/.
}
Moreover, since $|x-y|^2<2(x_n-1)(y_n-1)<2x_ny_n$ we have
\formula{
J_2(x,y) &\approx\left(\frac{2x_ny_n}{|x-y|^2}\right)^{n/2-1}\frac{2(x_n-1)(y_n-1)}{|x-y|^2}\int_0^{\frac{|x-y|^2}{2(x_n-1)(y_n-1)}}w^{n/2-1}dw\\
&\approx\left(\frac{2x_ny_n}{|x-y|^2}\right)^{n/2-1}\left(\frac{|x-y|^2}{2(x_n-1)(y_n-1)}\right)^{n/2-1} <\left(\frac{2x_ny_n}{|x-y|^2}\right)^{n/2-1}
}
and since
\formula{
J_3(x,y) &< \left(\frac{2x_ny_n}{|x-y|^2}\right)^{\mu+\nu-1}\frac{2(x_n-1)(y_n-1)}{|x-y|^2} \int_0^{\frac{|x-y|^2}{2x_ny_n}}w^{\mu+\nu-1}dw\\
&< \frac{(x_n-1)(y_n-1)}{x_ny_n}\\
&<\left(\frac{2x_ny_n}{|x-y|^2}\right)^{n/2-1}\left(\frac{|x-y|^2}{2(x_n-1)(y_n-1)}\right)^{n/2-1} <\left(\frac{2x_ny_n}{|x-y|^2}\right)^{n/2-1}\/,
}
which ends the proof in this case.

In the second case, i.e when $2(x_n-1)(y_n-1)\leq |x-y|^2\leq x_ny_n$, the integral part of (\ref{eq:J2:formula}) is comparable with a constant and consequently, $J_2(x,y)$ dominates the other parts. To see that note the asymptotic formula for the incomplete gamma function
\formula[eq:igf:infty]{
\int_z^{\infty}w^{\alpha-1}e^{-w}dw \approx z^{\alpha-1}e^{-z}\/,\quad z\to \infty\/.
}
which gives that the integral part of (\ref{eq:J1:formula}) decays exponentially as $|x-y|^2/(2(x_n-1)(y_n-1))$ grows to infinity and consequently $J_1(x,y)/J_2(x,y)$ is bounded from above in the considered region. Moreover, the above given estimates of $J_3(x,y)$ and $J_2(x,y)$ give
\formula{
J_3(x,y) &<\frac{(x_n-1)(y_n-1)}{x_ny_n} \leq 1\approx J_2(x,y)\/.
}

The final part of the proof relates to the case when $2(x_n-1)(y_n-1)\leq |x-y|^2$ and $x_ny_n\leq |x-y|^2$. Then, by the other assumption, the integral part of (\ref{eq:J3:formula}) behaves like a constant and to finish the proof it is enough to show that $J_3(x,y)$ dominates $J_1(x,y)$ and $J_2(x,y)$ in that case. Indeed, using (\ref{eq:igf:infty}) we get
\formula{
  J_2(x,y) &< \left(\frac{2x_ny_n}{|x-y|^2}\right)^{n/2-1}\frac{2(x_n-1)(y_n-1)}{|x-y|^2}\int_{\frac{|x-y|^2}{2x_ny_n}}^\infty w^{n/2-1}e^{-w}dw\\
  &\approx \left(\frac{2x_ny_n}{|x-y|^2}\right)^{n-2}\frac{2(x_n-1)(y_n-1)}{|x-y|^2} \left(\frac{|x-y|^2}{2x_ny_n}\right)^{n-2}\exp\left(-\frac{|x-y|^2}{2x_ny_n}\right)\\
	&\leq \left(\frac{2x_ny_n}{|x-y|^2}\right)^{\mu+\nu-1}\frac{2(x_n-1)(y_n-1)}{|x-y|^2} \left(\frac{|x-y|^2}{2x_ny_n}\right)^{n-2}\exp\left(-\frac{|x-y|^2}{2x_ny_n}\right)
  &< c_1(n) J_3(x,y)\/,
}
where $c_1(n)=(\int_{0}^{1/2}w^{n-2}e^{-w}dw)^{-1} \cdot \sup_{x\geq 1/2}x^{n-2}e^{-x}$. Finally, 
\formula{
  J_1(x,y) &<\left(\frac{2x_ny_n}{|x-y|^2}\right)^{n/2-1}\left(\frac{|x-y|^2}{2(x_n-1)(y_n-1)}\right)^{n/2-2}\exp\left(-\frac{|x-y|^2}{2(x_n-1)(y_n-1)}\right)\\
  & < \left(\frac{2x_ny_n}{|x-y|^2}\right)^{n-2}\frac{2(x_n-1)(y_n-1)}{|x-y|^2} \left(\frac{|x-y|^2}{2(x_n-1)(y_n-1)}\right)^{n-2}\exp\left(-\frac{|x-y|^2}{2(x_n-1)(y_n-1)}\right)\\
	  & < \left(\frac{2x_ny_n}{|x-y|^2}\right)^{\mu+\nu-1}\frac{2(x_n-1)(y_n-1)}{|x-y|^2} \left(\frac{|x-y|^2}{2(x_n-1)(y_n-1)}\right)^{n-2}\exp\left(-\frac{|x-y|^2}{2(x_n-1)(y_n-1)}\right)\\
  &<c_1(n)J_3(x,y)\/.
}
This ends the proof.
\end{proof}

The above-given result can be stated in terms of the hyperbolic distance in the following way.
\begin{corollary}
   For every $n>2$ and $\lambda>0$ we have
		\formula{
	   G^{\lambda}(x,y)\stackrel{\lambda,n}{\approx} \frac{1}{\sinh^{2\mu-1}(d_{\H^n}(x,y)/2)\cosh^{\nu+1/2}(d_{\H^n}(x,y))}\left(1\wedge \frac{(1\wedge \delta_a(x))(1\wedge \delta_a(y))}{1\wedge d_{\H^n}^2(x,y)}\right)\/,\quad x,y\in D\/,
	}
	where $\delta_a(x)$ denotes the (hyperbolic) distance of $x$ to the boundary of set $D$, or equivalently 
	\formula{
	G^{\lambda}(x,y)\stackrel{\lambda,n}{\approx}\left(1\wedge \frac{(1\wedge \delta_a(x))(1\wedge \delta_a(y))}{1\wedge d_{\H^n}^2(x,y)}\right)\,U^\lambda(x,y)\/,\quad x,y\in D\/.
	  }
\end{corollary}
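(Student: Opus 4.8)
The plan is to read the Corollary off the two-sided estimate of $G^{\lambda}$ in the theorem above and the estimate of $U^{\lambda}$ in the Proposition, after translating the Euclidean quantities into hyperbolic ones. First I would reduce to $a=1$: the dilation $z\mapsto z/a$ is an isometry of $\H^n$ carrying $D$ onto $D_1$, hence it fixes $d_{\H^n}$ and the boundary distances $\delta_a$, while $G^{\lambda}$ is scale invariant (as used in the proof of the representation theorem) and $U^{\lambda}$ is scale invariant by its explicit formula, so the general case follows from the case $a=1$. Writing $d=d_{\H^n}(x,y)$, the distance formula \eqref{eq:distance:formula} and the identity $\cosh d-1=2\sinh^2(d/2)$ give the dictionary
\begin{equation*}
\frac{2x_ny_n}{|x-y|^2}=\frac{1}{2\sinh^2(d/2)},\qquad \frac{|x-y|^2}{2x_ny_n}=\cosh d-1.
\end{equation*}
Hence $\left(2x_ny_n/|x-y|^2\right)^{\mu-1/2}\approx\frac{1}{\sinh^{2\mu-1}(d/2)}$; since $\cosh d=1+2\sinh^2(d/2)$ lies between $\max(1,2\sinh^2(d/2))$ and twice that, $1\wedge\frac{2x_ny_n}{|x-y|^2}\approx\frac{1}{\cosh d}$; and since $\cosh d-1\approx d^2$ for $d\le1$ while $1\wedge d^2$ and $1\wedge(\cosh d-1)$ are each comparable to $1$ for $d\ge1$, also $1\wedge d^2\approx1\wedge(\cosh d-1)=1\wedge\frac{|x-y|^2}{2x_ny_n}$. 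Finally, the vertical geodesic through $x$ realizes the distance to the horosphere $\{z_n=1\}$, so $\delta_1(x)=\log x_n$, and elementary estimates give $1\wedge\log x_n\approx1\wedge(x_n-1)$ together with the two-sided bound $x_n-1\le(1\wedge(x_n-1))\,x_n\le2(x_n-1)$ for $x_n>1$, i.e. $1\wedge(x_n-1)\approx(x_n-1)/x_n$ (and likewise for $y_n$).

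With this dictionary I would establish the second, equivalent, form of the Corollary first. Dividing the estimate of $G^{\lambda}$ by that of $U^{\lambda}$ cancels the common factor $\left(2x_ny_n/|x-y|^2\right)^{\mu-1/2}$, so
\begin{equation*}
\frac{G^{\lambda}(x,y)}{U^{\lambda}(x,y)}\stackrel{\lambda,n}{\approx}\frac{1\wedge\frac{2(x_n-1)(y_n-1)}{|x-y|^2}}{1\wedge\frac{2x_ny_n}{|x-y|^2}}=:\rho(x,y).
\end{equation*}
On the set $|x-y|^2\le2x_ny_n$ the denominator equals $1$, so $\rho(x,y)=1\wedge\frac{2(x_n-1)(y_n-1)}{|x-y|^2}$; on the complementary set $(x_n-1)(y_n-1)<x_ny_n<|x-y|^2/2$, so $\rho(x,y)=\frac{(x_n-1)(y_n-1)}{x_ny_n}$. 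On the other hand, using $1\wedge(x_n-1)\approx(x_n-1)/x_n$, $1\wedge(y_n-1)\approx(y_n-1)/y_n$ and $1\wedge d^2\approx1\wedge\frac{|x-y|^2}{2x_ny_n}$, the quantity $1\wedge\frac{(1\wedge\delta_1(x))(1\wedge\delta_1(y))}{1\wedge d^2}$ is comparable, on the first set, to $1\wedge\left(\frac{(x_n-1)(y_n-1)}{x_ny_n}\cdot\frac{2x_ny_n}{|x-y|^2}\right)=1\wedge\frac{2(x_n-1)(y_n-1)}{|x-y|^2}$, and, on the second set, to $1\wedge\frac{(x_n-1)(y_n-1)}{x_ny_n}=\frac{(x_n-1)(y_n-1)}{x_ny_n}$. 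In both cases this matches $\rho(x,y)$, so $\rho(x,y)\approx1\wedge\frac{(1\wedge\delta_1(x))(1\wedge\delta_1(y))}{1\wedge d^2}$; multiplying by $U^{\lambda}(x,y)$ yields the second form of the Corollary.

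The first form then follows by inserting the dictionary into the Proposition's estimate: $U^{\lambda}(x,y)\approx\left(2x_ny_n/|x-y|^2\right)^{\mu-1/2}\left(1\wedge\frac{2x_ny_n}{|x-y|^2}\right)^{\nu+1/2}\approx\frac{1}{\sinh^{2\mu-1}(d/2)\,\cosh^{\nu+1/2}(d)}$, and substituting this into the second form produces exactly the displayed expression. I expect the only genuinely computational point to be the comparison of $\rho(x,y)$ with the boundary-distance factor above; the one fact that makes it go through is the two-sided bound $x_n-1\approx(1\wedge(x_n-1))\,x_n$, which lets one trade the bulk factor $x_ny_n$ for the boundary factor $(x_n-1)(y_n-1)$ precisely when the truncation is inactive. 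Every comparison introduced beyond the quoted results carries absolute constants, except raising $1\wedge\frac{2x_ny_n}{|x-y|^2}\approx\frac{1}{\cosh d}$ to the power $\nu+1/2$, which costs a constant depending only on $\lambda$ and $n$; hence the resulting equivalences are of type $\stackrel{\lambda,n}{\approx}$.
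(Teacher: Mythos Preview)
Your proposal is correct and follows essentially the same route as the paper: both establish the same dictionary ($2x_ny_n/|x-y|^2=1/(2\sinh^2(d/2))$, $1\wedge 2x_ny_n/|x-y|^2\approx 1/\cosh d$, $\delta_a(x)=\log(x_n/a)$, and $(x_n-a)/x_n\approx\tanh\delta_a(x)\approx 1\wedge\delta_a(x)$) and then match the boundary factor case by case. The only organizational difference is that you derive the second (potential) form first via the ratio $G^\lambda/U^\lambda$ and then read off the first, whereas the paper translates the theorem's estimate directly into the first form and records the second as equivalent; the underlying computations are the same.
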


\begin{proof}
 First observe that by \eqref{eq:distance:formula} we simply have
\formula{
   1\wedge \frac{2x_ny_n}{|x-y|^2} \approx \left(1+\frac{|x-y|^2}{2x_ny_n}\right)^{-1} = \cosh^{-1}(d_{\H^n}(x,y))\/.
}
Moreover, \eqref{eq:distance:formula} also implies that
\formula{
   \frac{2x_ny_y}{|x-y|^2} = \left(\cosh(d_{\H^n}(x,y))-1\right)^{-1} = \frac12 \sinh^{-2}(d_{\H^n}(x,y)/2)\/.
}
Since the lines perpendicular to the boundary of the set $D$ are geodesis of the hyperbolic space, the distance of $x=(\tilde{x},x_n)$ to the boundary of $D$ is realized as a distance between $x$ and $(\tilde{x},a)$. Thus we have
\formula{
   \cosh\delta_a(x) = 1+\frac{(x_n-a)^2}{2x_na} = \frac12\left(\frac{x_n}{a}+\frac{a}{x_n}\right)\/, \quad \delta_a(x) = \ln(x_n/a)\/.  
}
Consequently
\formula{
   \frac{x_n}{a}-1 = e^{\delta_a(x)}-1\approx \sinh \delta_a(x)\/,\quad \frac{x_n}{a} = e^{\delta_a(x)}\approx \cosh\delta_a(x)\/.
}
Using this we can write
\formula{
   \frac{(x_n-a)(y_n-a)}{|x-y|^2} = \frac{(x_n/a-1)(y_n/a-1)}{2x_ny_n/a^2}\frac{2x_ny_n}{|x-y|^2}\approx \frac{\sinh\delta_a(x)\,\sinh\delta_a(y)}{\cosh\delta_a(x)\cosh\delta_a(y)}\frac{1}{2\sinh^2(d_{\H^n}(x,y)/2)}\/.
}
To finish the proof note that $\tanh\delta_a(x)\approx 1\wedge \delta_a(x)$ and 
\formula{
   \left(1\wedge \frac{(1\wedge\delta_a(x))(1\wedge\delta_a(y))}{\sinh^2(d_{\H^n}(x,y)/2)}\right)\cosh d_{\H^n(x,y)}\approx (1\wedge\delta_a(x))(1\wedge\delta_a(y)) 
}
whenever $d_{\H^n}(x,y)\geq 1$ and 
\formula{
  \left(1\wedge \frac{(1\wedge\delta_a(x))(1\wedge\delta_a(y))}{\sinh^2(d_{\H^n}(x,y)/2)}\right)\cosh d_{\H^n(x,y)}\approx 1\wedge \frac{(1\wedge\delta_a(x))(1\wedge\delta_a(y))}{d_{\H^n}^2(x,y)}
}
if only $d_{\H^n}(x,y)< 1$. Collecting all together we get the result.
\end{proof}
We end with the following conjecture stating that the form of the above-given estimates should be the same for more general smooth subsets of $\H^n$. Obviously, the conjecture is true if we assume that the considered set $D$ is bounded in hyperbolic metric. It follows from the fact that the potential theory on bounded subsets of $\H^n$ is comparable to Euclidean potential theory, since the Laplace-Beltrami operator is strongly elliptic operator on such domains.
\begin{conjecture}
   For every $C^{1,1}$ domain $D\subset \H^n$ we have
	\formula{
	   G_D(x,y) \stackrel{n,D}{\approx} \left(1\wedge \frac{(1\wedge \delta_a(x))(1\wedge \delta_a(y))}{1\wedge d_{\H^n}^2(x,y)}\right)\,U(x,y)\/,\quad x,y\in D\/.
	}
\end{conjecture}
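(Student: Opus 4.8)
The plan is to reduce the conjecture to two regimes -- an interior (bulk) regime and a boundary regime -- and to glue them by means of a uniform boundary Harnack principle. Throughout I would work with the probabilistic compensator identity
\formula{
   G_D(x,y) = U(x,y) - \E^x[U(X(\tau_D),y)]\/,
}
valid for $x,y\in D$, which is the $\lambda=0$ analogue of the harmonic-compensator computation already carried out in Section~\ref{sec:GreenFunctionAB}. Everything hinges on showing that the correction factor
\formula{
   \Lambda_D(x,y) := 1-\frac{\E^x[U(X(\tau_D),y)]}{U(x,y)}
}
is comparable, uniformly in $x,y\in D$ with constants depending only on $n$ and on the $C^{1,1}$ character of $D$, to the geometric factor $1\wedge \frac{(1\wedge \delta_a(x))(1\wedge \delta_a(y))}{1\wedge d_{\H^n}^2(x,y)}$ appearing in the statement. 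Since the paper already records that the conjecture holds when $D$ is bounded in the hyperbolic metric (by strong ellipticity of $\Delta_{\H^n}$ on such sets), the genuine content lies in the unbounded case, where the $C^{1,1}$ condition must be exploited at every scale.

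First I would treat the \emph{bulk} regime, in which $\delta_a(x)\wedge \delta_a(y)\geq 1$; here the target factor equals $1$, so it suffices to prove $G_D(x,y)\approx U(x,y)$. The inequality $G_D\leq U$ is immediate from the compensator identity, and the reverse comparison follows from the hyperbolic Harnack inequality: joining $x$ to $y$ by a Harnack chain that stays at distance $\gtrsim 1$ from $\partial D$ (possible because both endpoints lie in the one-unit-thick interior core of the $C^{1,1}$ domain) shows that the exit contribution $\E^x[U(X(\tau_D),y)]$ cannot absorb more than a fixed fraction of $U(x,y)$, so $\Lambda_D(x,y)\approx 1$. The uniform interior ball condition supplied by $C^{1,1}$ keeps these constants depending only on $n$ and $D$.

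Next comes the \emph{boundary} regime, where at least one point, say $x$, satisfies $\delta_a(x)<1$. At such small scales the hyperbolic and Euclidean metrics are comparable, so the uniform interior and exterior ball conditions furnished by the $C^{1,1}$ character of $D$ squeeze $D$ locally between two half-spaces of comparable geometry, and I can import the sharp half-space estimate proved above, whose boundary factor $1\wedge \frac{2(x_n-a)(y_n-a)}{|x-y|^2}$ is precisely of the required form. This yields matching upper and lower bounds for $\Lambda_D(x,y)$: of order $1\wedge \delta_a(x)$ when $y$ lies outside the localization ball centered at $x$, and of order $1\wedge \frac{(1\wedge \delta_a(x))(1\wedge \delta_a(y))}{1\wedge d_{\H^n}^2(x,y)}$ when both points sit close together near $\partial D$. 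The boundary Harnack principle, together with the Carleson estimate for $\Delta_{\H^n}$ on $C^{1,1}$ sets, is the tool that lets me assemble these local estimates into the global product structure, by controlling the ratio of $G_D(\cdot,y)$ to a single-point boundary profile uniformly and thereby factoring out the two boundary decays.

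The main obstacle will be \emph{uniformity in the unbounded hyperbolic setting}. Unlike the Euclidean case, Harnack chains connecting bulk points can have hyperbolic length growing with $d_{\H^n}(x,y)$, and $U(x,y)$ itself decays exponentially in that distance (as the $\lambda$-potential estimate shows, $U\approx (2e^{-d_{\H^n}(x,y)})^{n-1}$ for large separation); so the bulk comparison $G_D\approx U$ must hold with constants independent of how far apart and how far out the points lie. This forces one to establish a \emph{scale-invariant} boundary Harnack principle and Carleson estimate for the Laplace--Beltrami operator on arbitrary unbounded $C^{1,1}$ domains, and to verify that the exponential factor carried by $U$ passes identically through the compensator so that it cancels in the ratio $\Lambda_D$. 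Securing such a globally uniform boundary Harnack principle -- rather than the bounded-domain version already covered -- is exactly the gap that keeps this statement a conjecture.
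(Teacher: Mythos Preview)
The statement you are asked to prove is labeled a \emph{Conjecture} in the paper, and the paper provides no proof of it; the only remark is that the bounded case follows from strong ellipticity of $\Delta_{\H^n}$ on bounded domains. There is therefore nothing to compare your argument against.

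Your proposal is not a proof either, and you are candid about this: the final paragraph correctly identifies the missing ingredient as a scale-invariant boundary Harnack principle and Carleson estimate for $\Delta_{\H^n}$ on unbounded $C^{1,1}$ domains, with constants independent of location and separation. The outline you give --- compensator identity, bulk/boundary decomposition, local comparison with the half-space result, and gluing via boundary Harnack --- is the natural strategy and is consistent with how such estimates are obtained in the Euclidean setting. But the steps ``joining $x$ to $y$ by a Harnack chain that stays at distance $\gtrsim 1$ from $\partial D$'' and ``import the sharp half-space estimate'' both presuppose exactly the uniform geometric control that is unavailable for general unbounded hyperbolic $C^{1,1}$ domains; in particular, a Harnack chain of hyperbolic length $d_{\H^n}(x,y)$ degrades constants exponentially unless one has more, and the local flattening of $\partial D$ need not be uniform across an unbounded boundary without additional hypotheses. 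So your sketch accurately locates the obstruction but does not remove it, which is precisely why the statement remains a conjecture.
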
 

%%%%%%%%%%%%%%%%%%%%%%%%%%%%%%%%%%%%%%%%%%%%%%%%%%%%%%%%%%%%%%%%%%%%%%%%%%%%%%%%%%%%%%%%%%%%%%%%%%%%%%%%%%%%%%%%%%%%%%%%%%%%%%%%%%%%%%%%%%%%%%%%%%%%%%%%%%%%%%%%%%%%%%%%%%%%%%%%%%%%%%%%%%%%%%%%%%%%%%%%%%%%%%%%%%%%%%%%%%%%%%%%%%%%%%%%%%%%%%%%%%%%%%%%%%%%%%%%%%%%%%%%%%%%%%%%%%%%%%%%%%%%%%%%%%%%%%%%%%%%%%%%%%%%%%%%%%%%%%%%%%%%%%%%%%%%%%%%%%%%%%%%%%%%%%%%%%%%%%%%%%%%%%%%%%%%%%%%%%%%%%%%%%%%%%%%%%%%%%%%%%%%%%%%%%%%%%%%%%%%%%%%%%%%%%%%

\bibliography{bibliography}
\bibliographystyle{plain}

\end{document}